\newtheorem{Thm}{Theorem}[section]
\newtheorem{Prop}[Thm]{Proposition}
\newtheorem{Lem}[Thm]{Lemma}
\theoremstyle{remark}
\newtheorem{Rem}[Thm]{Remark}
\theoremstyle{definition}
\renewcommand{\d}{{\rm d}}
\title[The Chevalley--Eilenberg complex and deformation quantization...]{The Chevalley--Eilenberg complex and deformation quantization in presence of two branes}
\author{C.~A.~Rossi}
\address{MPIM Bonn, Vivatsgasse 7, 53111 Bonn (Germany)}
\begin{document}



\begin{abstract}
In this note, we prove that, for a finite-dimensional Lie algebra $\mathfrak g$ over a field $\mathbb K$ of characteristic $0$ which contains $\mathbb C$, the Chevalley--Eilenberg complex $\mathrm U(\mathfrak g)\otimes \wedge(\mathfrak g)$, which is in a natural way a deformation quantization of the Koszul complex of $\mathrm S(\mathfrak g)$, is $A_\infty$-quasi-isomorphic to the deformation quantization of the $A_\infty$-bimodule $K=\mathbb K$ provided by the Formality Theorem in presence of two branes~\cite{CFFR}.
\end{abstract}

\maketitle

\section{Introduction}\label{s-intro}
Let $\mathbb K$ be a field of characteristic $0$. 
To a general finite-dimensional Lie algebra $\mathfrak g$ we may associate a natural complex, the Chevalley--Eilenberg chain complex $\mathrm{CE}(\mathfrak g)=\mathrm U(\mathfrak g)\otimes \wedge(\mathfrak g)$, where $\mathrm U(\mathfrak g)$, resp.\ $\wedge(\mathfrak g)$, denotes the UEA (short for Universal Enveloping Algebra), resp.\ exterior coalgebra, of $\mathfrak g$.
\begin{Rem}\label{r-alg-coalg}
In fact, $\wedge(\mathfrak g)$ is naturally a graded commutative algebra: in the framework of Koszul duality, $\wedge(\mathfrak g)$ has to be regarded as the Koszul dual coalgebra of $\mathrm S(\mathfrak g)$.
\end{Rem}
We observe further that we consider on $\mathrm{CE}(\mathfrak g)$ a non-standard non-positive grading, so as to make the differential of degree $1$.

It is obvious that $\mathrm{CE}(\mathfrak g)$ inherits a left $\mathrm U(\mathfrak g)$-action, which makes it into a complex of left $\mathrm U(\mathfrak g)$-modules; it is also well-known that the cohomology of $\mathrm{CE}(\mathfrak g)$ is concentrated in degree $0$, where it equals the augmentation module $\mathbb K$ over $\mathrm U(\mathfrak g)$.

Furthermore, $\mathrm{CE}(\mathfrak g)$ inherits {\em via} contraction a dg right action from the dg algebra $(\wedge(\mathfrak g^*),\mathrm d,\wedge)$ (the Chevalley--Eilenberg complex of $\mathfrak g$ with values in the trivial $\mathfrak g$-module $\mathbb K$), thus turning $\mathrm{CE}(\mathfrak g)$ into a dg $(\mathrm U(\mathfrak g),\cdot)$-$(\wedge(\mathfrak g^*),\mathrm d,\wedge)$-bimodule.
In particular, the complex $\mathrm{CE}(\mathfrak g)$ has a structure of $A_\infty$-$(\mathrm U(\mathfrak g),\cdot)$-$(\wedge(\mathfrak g^*),\mathrm d,\wedge)$-bimodule.

The complex $\mathrm{CE}(\mathfrak g)$ is actually the Koszul complex of the inhomogeneous Koszul algebra $\mathrm U(\mathfrak g)$, see {\em e.g.} the theory of inhomogeneous quadratic algebras developed in~\cite{Pr},~\cite[Section 3.6]{LV}. 

On the other hand, setting $A=\mathrm S(\mathfrak g)$ and $B=\wedge(\mathfrak g^*)$, both regarded as commutative dg algebras with trivial differential, the graphical techniques of Kontsevich permit us to endow $K=\mathbb K$ with a non-trivial structure of $A_\infty$-$A$-$B$-bimodule, see~\cite[Subsection 6.2]{CFFR} and later on for more details.
This $A_\infty$-bimodule structure is a key ingredient in the formulation of a Formality Theorem for the Poisson manifold $X=\mathfrak g^*$ in presence of the two coisotropic submanifolds $U_1=X$ and $U_2=\{0\}$ (observe that the natural Kirillov--Kostant Poisson structure on $X$ vanishes at $0$).
The (graded) Formality Theorem of Kontsevich~\cite{K} produces $i)$ an associative algebra $(\mathrm S(\mathfrak g),\star)$, with (non-commutative!) product $\star$ out of $A$ and $ii)$ a dg algebra $(\wedge(\mathfrak g^*),\mathrm d,\wedge)$ with standard wedge product and Chevalley--Eilenberg differential $\mathrm d$ out of $B$.
Moreover, the Formality Theorem in presence of two branes~\cite{CFFR} yields a corresponding deformation quantization of the $A_\infty$-$A$-$B$-bimodule $K$ into an $A_\infty$-$(\mathrm S(\mathfrak g),\star)$-$(\wedge(\mathfrak g^*),\mathrm d,\wedge)$-bimodule, which, by abuse of notation, is still denoted by $K$.

It has been proved {\em e.g.} in~\cite[Subsection 8.3]{K} or in~\cite[Subsection 3.2]{CFR} that $(\mathrm S(\mathfrak g),\star)$ is isomorphic to $(\mathrm U(\mathfrak g),\cdot)$ as an associative algebra; in~\cite{W}, it has been proved that the (left) augmentation module $\mathbb K$ over $\mathrm S(\mathfrak g)$ deforms to a left $(\mathrm S(\mathfrak g),\star)$-module.
Therefore, there is an isomorphism of dg bimodules from the dg $(\mathrm U(\mathfrak g),\cdot)$-$(\wedge(\mathfrak g^*),\mathrm d,\wedge)$-bimodule $\mathrm{CE}(\mathfrak g)$ to the $(\mathrm S(\mathfrak g),\star)$-$(\wedge(\mathfrak g^*),\mathrm d,\wedge)$-bimodule $\mathrm S(\mathfrak g)\otimes\wedge(\mathfrak g)$ with due changes in the differential and in the left module structure. 
\begin{Thm}\label{t-CE-bar}
The (deformed) $A_\infty$-$(\mathrm S(\mathfrak g),\star)$-$(\wedge(\mathfrak g^*),\mathrm d,\wedge)$-bimodule $K$ is $A_\infty$-quasi-isomorphic to the $A_\infty$-$(\mathrm S(\mathfrak g),\star)$-$(\wedge(\mathfrak g^*),\mathrm d,\wedge)$-bimodule $\mathrm S(\mathfrak g)\otimes\wedge(\mathfrak g)$.
\end{Thm}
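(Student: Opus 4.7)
The bimodule $K$ is one-dimensional with zero underlying differential, while $\mathrm{CE}(\mathfrak g)$ has cohomology $\mathbb K$ concentrated in degree $0$. Consequently the zeroth Taylor component $F_{0,0}$ of any candidate $A_\infty$-morphism $F\colon \mathrm{CE}(\mathfrak g)\to K$ is automatically a quasi-isomorphism as soon as it restricts on $\mathrm U(\mathfrak g)\otimes\wedge^0(\mathfrak g)$ to the classical augmentation $\epsilon$ and vanishes on $\mathrm U(\mathfrak g)\otimes\wedge^{>0}(\mathfrak g)$. The plan is therefore to construct a tower $F=(F_{p,q})_{p,q\ge 0}$ of Taylor components whose leading piece is this augmentation and which satisfies the $A_\infty$-morphism equations for $A_\infty$-$(\mathrm S(\mathfrak g),\star)$-$(\wedge(\mathfrak g^*),\mathrm d,\wedge)$-bimodules.

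I would construct the $F_{p,q}$ by Kontsevich-style graph integrals on a configuration space adapted from the two-branes setup of~\cite{CFFR}: configurations on a disk with two boundary arcs, one carrying $p$ ordered inputs from $A=\mathrm S(\mathfrak g)$ and the other carrying $q$ ordered inputs from $B=\wedge(\mathfrak g^*)$, together with a single distinguished interior vertex carrying the input from $\mathrm{CE}(\mathfrak g)$. The $\wedge(\mathfrak g)$-factor at the distinguished vertex would be encoded as a prescribed family of outgoing edges, dual to the role of $\wedge(\mathfrak g^*)$ along the $B$-brane. For each admissible graph $\Gamma$ one then defines the Kontsevich weight $w_\Gamma$ as an integral of a product of angle forms, and a polydifferential operator $U_\Gamma$, and sets $F_{p,q}=\sum_\Gamma w_\Gamma\, U_\Gamma$.

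The $A_\infty$-morphism equations would then be obtained from Stokes' theorem on the Fulton--MacPherson-style compactification. Codimension-one boundary strata fall into the familiar families: collisions in the bulk produce the deformed structures via Kontsevich's formality; collisions at the $A$-brane or the $B$-brane yield respectively the star product $\star$ and the pair $(\mathrm d,\wedge)$, together with the corresponding left $\mathrm U(\mathfrak g)$- and right $\wedge(\mathfrak g^*)$-actions on $\mathrm{CE}(\mathfrak g)$; collapses of a subconfiguration containing the distinguished vertex onto either brane produce the terms coming from the $A_\infty$-bimodule structure on $K$ built in~\cite{CFFR}; and hidden faces are expected to vanish by Kontsevich's standard weight-vanishing lemmas. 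Matching these contributions with the two sides of the $A_\infty$-morphism axiom is the core technical calculation. Evaluating $F$ on the trivial (empty boundary) configuration then produces exactly $\epsilon\otimes \mathrm{pr}$, concluding the quasi-isomorphism statement.

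The main obstacle I anticipate is the combinatorial bookkeeping of admissible graph types, signs and multiplicities required so that the Stokes computation yields precisely the $A_\infty$-morphism relation, and in particular the correct identification of the boundary terms coming from the bimodule structure on $K$ with those produced by the deformed dg bimodule structure on $\mathrm{CE}(\mathfrak g)$ — the two are produced by morally different graph combinatorics and their coincidence along the relevant strata is the substantive point. If this direct graphical route becomes unwieldy, a more abstract back-up is to exhibit both $\mathrm{CE}(\mathfrak g)$ and $K$ as models of the augmentation module in the derived category of $A_\infty$-$(\mathrm S(\mathfrak g),\star)$-$(\wedge(\mathfrak g^*),\mathrm d,\wedge)$-bimodules and invoke the standard lifting property of semi-free $A_\infty$-resolutions to promote the classical comparison $\epsilon$ to the desired $A_\infty$-quasi-isomorphism.
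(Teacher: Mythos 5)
Your plan is a genuinely different route from the paper's, but as it stands it has a substantive gap: the entire content of the theorem is concentrated in the step you defer. You propose to build the higher Taylor components $F_{p,q}$ by a new family of configuration-space integrals with a distinguished \emph{interior} vertex carrying the input from $\mathrm{CE}(\mathfrak g)$, and then to verify the $A_\infty$-morphism equations by Stokes' theorem. But this geometric setup is not part of the two-brane formalism of~\cite{CFFR} (there the module $K=\mathbb K$ sits at the corner of the quadrant, and the inputs from $A$ and $B$ sit on the two boundary half-lines); you would have to invent the compactification, the propagators near the distinguished vertex, the encoding of the $\mathrm U(\mathfrak g)\otimes\wedge(\mathfrak g)$ input as ``outgoing edges'', and prove the vanishing lemmas for the new hidden faces. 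None of this is standard, and the ``core technical calculation'' of matching boundary strata --- which you yourself flag --- is precisely where the theorem lives. The fallback argument is also not sound as stated: the deformed $K$ is \emph{not} a priori ``a model of the augmentation module'' in the derived category of $A_\infty$-$(\mathrm S(\mathfrak g),\star)$-$(\wedge(\mathfrak g^*),\mathrm d,\wedge)$-bimodules, because an $A_\infty$-bimodule is not determined up to quasi-isomorphism by its underlying complex together with the induced actions on cohomology; the nontrivial higher components $\mathrm d_K^{m,n}$, $m,n\geq 1$, are exactly what must be accounted for, so invoking the lifting property of semi-free resolutions against ``the augmentation module'' begs the question.

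The paper avoids both difficulties by factoring the comparison through the $A_\infty$-bar construction: the counit $\mu\colon A\underline\otimes_A K\to K$ is an $A_\infty$-quasi-isomorphism by a general unitality argument (Proposition~\ref{p-bar}), and the antisymmetrization map $\Phi\colon A\otimes B^*\to A\underline\otimes_A K$ of~\eqref{eq-skew} is shown to be a \emph{strict} $A_\infty$-quasi-isomorphism (Theorem~\ref{t-koszul-bar-1}). Because $\Phi$ is strict, no new integrals need to be constructed; one only has to check that all would-be obstructions vanish, which is done by counting the polynomial degree of the multidifferential operators attached to admissible graphs (reducing the surviving $n=0$ terms to the undeformed computation of~\cite{FRW}). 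If you want to salvage your approach, the most economical fix is to adopt this factorization: the bar construction supplies for free the tower of higher Taylor components you were trying to build by hand.
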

For a finite-dimensional $\mathbb K$-vector space $V$, it has been proved in~\cite{FRW} that the Koszul complex $\mathrm K(A)$ of the Koszul algebra $A=\mathrm S(V)$, viewed as a dg $A$-$B$-bimodule, $B=\wedge(V^*)$, is $A_\infty$-quasi-isomorphic to $K=\mathbb K$ with the non-trivial $A_\infty$-$A$-$B$-bimodule structure described in~\cite{CFFR}.

The Chevalley--Eilenberg complex $\mathrm{CE}(\mathfrak g)$, as already remarked, is the Koszul complex of the  quadratic-linear Koszul algebra $\mathrm U(\mathfrak g)$: $\mathrm{CE}(\mathfrak g)$ admits a nice description in terms of deformation quantization of the Koszul complex of $A=\mathrm S(\mathfrak g)$.

The natural question arises, whether $\mathrm{CE}(\mathfrak g)$ is $A_\infty$-quasi-isomorphic to the deformation quantization of $K$ in the sense of~\cite{CFFR}: Theorem~\ref{t-CE-bar} provides a positive answer to this question. 

The strategy of the proof mimics the one adopted in the proof of~\cite[Theorem 1.2]{FRW}: namely, setting (again by abuse of notation) $A=(\mathrm S(\mathfrak g),\star)$ and $B=(\wedge(\mathfrak g^*),\mathrm d,\wedge)$, we prove that both morphisms in the sequence 
\[
S(\mathfrak g)\otimes\wedge(\mathfrak g)\hookrightarrow A\underline\otimes_A K \to K
\]
are quasi-isomorphisms of $A_\infty$-bimodules.
$(-\underline\otimes_A-)$ denotes the tensor product of $A_\infty$-bimodules, defined in Section~\ref{s-1}. 
Since both $A$ and $B$ are strictly unital algebras and $K$ is a strictly unital $A_\infty$-$A$-$B$-bimodule, the $A_\infty$-tensor product $A\underline\otimes_AK$ is $A_\infty$-quasi-isomorphic to $K$. 
On the other hand, the natural inclusion from $S(\mathfrak g)\otimes\wedge(\mathfrak g)\hookrightarrow A\underline{\otimes}_A K$ defines a strict $A_\infty$-quasi-isomorphism of $A_\infty$-bimodules: this is the non-trivial part of the statement, and is proved by inspecting carefully the deformed $A_\infty$-bimodule structure on $K$.

 \subsection*{Acknowledgments} We thank B.~Vallette for having carefully read a first version of this note and for having pointed out the connection to inhomogeneous quadratic algebras and related Koszul duality theory, M.~Duflo for having elucidated to us the idea which has been presented in Subsection~\ref{ss-2-2} and for having carefully read a first version of this note, and the anonymous referee for many useful comments and suggestions.
 
 \section{Notation and conventions}\label{s-0}
Throughout the paper, $\mathbb K$ is a field of characteristic $0$, which contains $\mathbb C$. 
Let $\mathfrak g$ be a finite-dimensional Lie algebra over $\mathbb K$ and $\mathfrak g^*$ its dual over $\mathbb K$.
Further, we denote by $\{x_i\}$, $i=1,\dots,d=\dim V$, a basis of $\mathfrak g$: this specifies automatically global linear coordinates on $\mathfrak g^*$.
We further denote by $\pi$ the Kirillov--Kostant--Souriau linear Poisson structure on $\mathfrak g^*$: if we consider the algebra $\mathcal O(X)$ of global regular functions on $X=\mathfrak g^*$, we have $\mathcal O(X)=\mathrm S(\mathfrak g)$ and the Lie bracket on $\mathfrak g$ extends to a biderivation $\{\bullet,\bullet\}$ on $\mathcal O(X)$, which obviously satisfies the Jacobi identity, hence defines a Poisson bracket on $\mathcal O(X)$.
The corresponding Poisson bivector field $\pi$ is expressed w.r.t.\ the coordinates $\{x_i\}$ {\em via} $\pi=f_{ij}^kx_k\partial_i\partial_j$, suppressing wedge products for the sake of brevity.

Let $\texttt{grMod}_\mathbb K$ be the monoidal category of graded vector spaces, with graded tensor product, and with inner spaces of morphisms (i.e.\ we consider morphisms, which are finite sums of morphisms of any degree); $[\bullet]$ denotes the degree-shifting functor on $\texttt{grMod}_\mathbb K$.
In particular, the identity morphism of an object $M$ of $\texttt{grMod}_\mathbb K$ induces a canonical isomorphism $s:M\to M[1]$ of degree $-1$ with inverse $s^{-1}:M[1]\to M$ (suspension and de-suspension isomorphisms): for the sake of simplicity, we will Cartan's notation
\[
(v_1|\cdots|v_n)=s(v_1)\otimes\cdots \otimes s(v_n).
\]
The degree of an element $m$ of a homogeneous component of an object $M$ of $\texttt{grMod}_\mathbb K$ is denoted by $|m|$.
Unadorned tensor products are meant to be over $\mathbb K$.

An $A_\infty$-algebra structure over $A$, an object of $\texttt{grMod}_\mathbb K$, is equivalent to the existence of a codifferential on the cofree coalgebra with counit on $\mathrm T(A[1])=\bigoplus_{n\geq 0}A[1]^{\otimes n}$ cogenerated by $A[1]$. 
The codifferential $\mathrm d_A$ is uniquely determined by its Taylor components
\[
\mathrm d_A^n:A[1]^{\otimes n}\to A[1],\ n\geq 0,
\]
all of degree $1$, and the condition that $\mathrm d_A$ squares to $0$ translates into an infinite family of quadratic relations between its Taylor components.
We further set $\mathrm m_A^n=(-1)^{\frac{n(n-1)}2}s^{-1}\circ \mathrm d_A^n \circ s^{\otimes n}$. 
By construction, $\mathrm m_A^n$ are $\mathbb K$-linear maps from $A^{\otimes n}$ to $A$ of degree $2-n$.
We refer to $\mathrm m_A^0$ as to the curvature of $A$: it is an element of $A$ of degree $2$, which measures the failure of $(A,\mathrm m_A^1)$ to be a dg (short for ``differential graded'') vector space over $\mathbb K$.
If $\mathrm m_A^0=0$, then $A$ is said to be flat.
\begin{Rem}\label{r-dga}
A dg algebra $(A,\mathrm d_A,\mathrm m_A)$ is a flat $A_\infty$-algebra by means of the Taylor components
\[
\mathrm d_A^1=s\circ \mathrm d_A\circ s^{-1},\ \mathrm d_A^2=-s\circ\mathrm m_A\circ (s^{-1})^{\otimes 2},\ \mathrm d_A^n=0,\ n\geq 3.
\]
In particular, a flat $A_\infty$-structure on a dg vector space $A$ concentrated in degree $0$ is equivalent to an associative algebra structure on $A$.
\end{Rem}
Given two $A_\infty$-algebras $A$, $B$, an $A_\infty$-$A$-$B$-bimodule structure on an object $K$ of $\texttt{grMod}_\mathbb K$ is equivalent to the existence of a codifferential on the cofree bicomodule $\mathrm T(A[1])\otimes K[1]\otimes \mathrm T(B[1])$ which is compatible with the codifferentials on $\mathrm T(A[1])$ and $\mathrm T(B[1])$. 
As for $A_\infty$-algebras, such a codifferential $\mathrm d_K$ is uniquely determined by its Taylor components
\[
\mathrm d_K^{m,n}:A[1]^{\otimes m}\otimes K[1]\otimes B[1]^{\otimes n}\to K[1],\ m,n\geq 0,
\]
all of degree $1$.
As before, we introduce the maps $\mathrm m_K^{m,n}=(-1)^{\frac{(m+n)(m+n+1)}2}s^{-1}\circ \mathrm d_K^{m,n}\circ s^{\otimes m+1+n}$, of degree $1-m-n$.
The condition that $\mathrm d_K$ squares to $0$ is equivalent to an infinite family of quadratic relations between the Taylor components of $\mathrm d_A$, $\mathrm d_B$ and $\mathrm d_K$.
For more details on $A_\infty$-bimodules over $A_\infty$-algebras, we refer to~\cite[Sections 3-4]{CFFR}.
\begin{Rem}\label{r-dgbimod}
Given two dg algebras $(A,\mathrm d_A,\mathrm m_A)$ and $(B,\mathrm d_B,\mathrm m_B)$, which in virtue of Remark~\ref{r-dga} may be regarded as flat $A_\infty$-algebras, a dg $A$-$B$-bimodule structure on $K$ is equivalent to an $A_\infty$-$A$-$B$-bimodule structure on $K$ with Taylor components
\[
\mathrm d_{K}^{0,0}=s\circ \mathrm d_K\circ s^{-1},\ \mathrm d_K^{1,0}=-s\circ \mathrm m_L\circ (s^{-1})^{\otimes 2},\ \mathrm d_K^{0,1}=-s\circ \mathrm m_R\circ (s^{-1})^{\otimes 2},\ \mathrm d_K^{m,n}=0,\ m+n\geq 2, 
\]
where $\mathrm d_K$, resp.\ $\mathrm m_L$, resp.\ $\mathrm m_R$, denotes the differential, resp.\ the left $A$-, resp.\ the right $B$-action, on $K$.
\end{Rem}
It is not difficult to verify that an $A_\infty$-algebra $A$ can be turned easily into an $A_\infty$-$A$-$A$-bimodule by declaring $\mathrm d_A^{m,n}=\mathrm d_A^{m+n+1}$, $m,n\geq 0$: this obvious observation will play an important role in later computations.

It is important to observe that, if $A$ and $B$ are both flat, then an $A_\infty$-$A$-$B$-bimodule structure on $K$ restricts to a left $A_\infty$-$A$- and right $A_\infty$-$B$-module structure on $K$ respectively in the sense of~\cites{Kel,L-H}.
On the other hand, if either $A$ or $B$ or both have non-trivial curvature, the $A_\infty$-bimodule structure does not restrict to (left or right) $A_\infty$-module structures, see e.g.~\cite{W} and~\cite[Subsection 4.1]{CFFR}.

An $A_\infty$-algebra $A$ is said to be {\bf strictly unital}, if it possesses an element $1$ of degree $0$, such that
\[
\mathrm m_A^2(1\otimes a)=\mathrm m_A^2(a\otimes 1)=a,\quad \mathrm m_A^n(a_1\otimes \cdots\otimes a_n)=0,\ n\neq 2,
\]
if $a_i=1$, for some $i=1,\dots,n$.
If $A$ is strictly unital, and $K$ is an $A_\infty$-$A$-$B$-bimodule, then $K$ is strictly (left-)unital w.r.t.\ $A$, if the identities hold true
\[
\mathrm m_K^{1,0}(1\otimes k)=k,\quad \mathrm m_K^{m,n}(a_1\otimes \cdots\otimes a_m\otimes k\otimes b_1\otimes \cdots\otimes b_n)=0,\ m\neq 1,\ n\geq 0,
\]
if $a_i=1$, for some $i=1,\dots,m$.
Similarly, one defines a strictly (right-)unital $A_\infty$-bimodule structure on $K$. 

Given two $A_\infty$-algebras $A$, $B$, an $A_\infty$-morphism from $A$ to $B$ is a coalgebra morphism $\varphi:\mathrm T(A[1])\to \mathrm T(B[1])$ of degree $0$ and compatible with the respective codifferentials.
The cofreeness of $\mathrm T(A[1])$ and $\mathrm T(B[1])$ implies that an $A_\infty$-morphism $\varphi$ is uniquely determined by its Taylor components 
\[
\varphi^n:A[1]^{\otimes n}\to B[1],\ n\geq 0,\ \text{all of degree $0$}.
\]
Similarly, given two $A_\infty$-algebras $A$, $B$ and two $A_\infty$-$A$-$B$-bimodules $K_1$, $K_2$, an $A_\infty$-morphism from $K_1$ to $K_2$ is a morphism of bicomodules $\psi:\mathrm T(A[1])\otimes K_1[1]\otimes\mathrm T(B[1])\to\mathrm T(A[1])\otimes K_1[1]\otimes\mathrm T(B[1])$ of degree $0$ and compatible with the respective codifferentials: as for $A_\infty$-morphisms between $A_\infty$-algebras, $\psi$ is uniquely determined by its Taylor components 
\[
\psi^{m,n}:A[1]^{\otimes m}\otimes K_1[1]\otimes B[1]^{\otimes n}\to K_2[1],\ m,n\geq 0.
\]
Of course, the compatibility with the codifferentials translates into a complicated infinite family of polynomial identities w.r.t.\ the Taylor components of all codifferentials and morphisms (which are linear w.r.t.\ the Taylor components of the codifferentials).

A morphism $\varphi$ of $A_\infty$-algebras, resp.\ $\psi$ of $A_\infty$-bimodules, is said to be {\bf strict}, if its only non-trivial Taylor component is $\varphi^1$, resp.\ $\psi^{0,0}$. 

Finally, we denote by $\hbar$ a formal parameter (``Planck's constant'').
For an object $V$ of $\texttt{grMod}_\mathbb K$, we set $V_{\hbar}=V[\![\hbar]\!]$: it is a graded, topologically free $\mathbb K[\![\hbar]\!]$-module (here, the degree of $\hbar$ is set to be $0$).
In particular, we may consider the category $\texttt{grMod}_{\mathbb K[\![\hbar]\!]}$ of graded, topologically free $\mathbb K[\![\hbar]\!]$-modules: it is a symmetric monoidal category with the topological tensor product over $\mathbb K[\![\hbar]\!]$.

\section{The $A_\infty$-bar construction}\label{s-1}
In this Section, we briefly review the main features of the tensor product between $A_\infty$-bimodules, focusing on the $A_\infty$-bar construction associated to a strictly unital $A_\infty$-$A$-$B$-bimodule $K$ over two flat, strictly unital $A_\infty$-algebras $A$, $B$: we refer to~\cite[Section 3]{FRW} for more details, recalling here only the main formul\ae\ needed for later computations.

For three $A_\infty$-algebras $A$, $B$ and $C$, an $A_\infty$-$A$-$B$-bimodule $K_1$ and an $A_\infty$-$B$-$C$-bimodule $K_2$, we consider the tensor product of $K_1$ and $K_2$ over $B$, as an element of $\texttt{grMod}_\mathbb K$
\[
K_1\underline\otimes_B K_2=K_1\otimes \mathrm T(B[1])\otimes K_2,
\]
whose $A_\infty$-bimodule structure can be explicitly given in terms of its Taylor components {\em via}
\begin{equation}\label{eq-tayl-tens}
\begin{aligned}
&\mathrm d_{K_1\underline\otimes_B K_2}^{m,n}\!(a_1|\cdots|a_m|k_1\otimes (b_1|\cdots|b_q)\otimes k_2|c_1|\cdots|c_n)=0,\quad m,n>0\\
&\mathrm d_{K_1\underline\otimes_B K_2}^{m,0}\!(a_1|\cdots|a_m|k_1\otimes (b_1|\cdots|b_q)\otimes k_2)=\sum_{l=0}^q s\left(s^{-1}(\mathrm d_{K_1}^{m,l}(a_1|\cdots|a_m|k_1|b_1|\cdots|b_l))\otimes (b_{l+1}|\cdots|b_q)\otimes k_2\right),\quad m>0\\
&\mathrm d_{K_1\underline\otimes_B K_2}^{0,n}\!(k_1\otimes (b_1|\cdots|b_q)\otimes k_2|c_1|\cdots|c_n)=(-1)^{|k_1|+\sum_{j=1}^q(|b_j|-1)}\sum_{l=0}^q s\left(k_1\otimes (b_1|\cdots|b_l)\otimes\right.\\
&\phantom{\mathrm d_{K_1\underline\otimes_B K_2}^{0,n}\!(k_1\otimes (b_1|\cdots|b_q)\otimes k_2|c_1|\cdots|c_n)=}\left.s^{-1}(\mathrm d_{K_1}^{q-l,n}(b_{l+1}|\cdots|b_q|k_2|c_1|\cdots|c_n)\right),\quad n>0,\\
&\mathrm d_{K_1\underline\otimes_B K_2}^{0,0}\!\left(s(k_1\otimes (b_1|\cdots|b_q)\otimes k_2)\right)=\sum_{l=0}^q s\left(s^{-1}(\mathrm d_{K_2}^{0,l}(k_1|b_1|\cdots|b_l)\otimes (b_{l+1}|\cdots|b_q)\otimes k_2\right)+\\
&\phantom{\mathrm d_{K_1\underline\otimes_B K_2}^{0,0}\!\left(s(k_1\otimes (b_1|\cdots|b_q)\otimes k_2)\right)=}+\sum_{0\leq l\leq q\atop 0\leq p\leq q-l} (-1)^{(|k_1|-1)+\sum_{j=1}^l (|b_j|-1)}s\!\left(k_1\otimes (b_1|\cdots|\mathrm d_B^p(b_{l+1}|\cdots|b_{l+p})|\cdots|b_q)\otimes k_2\right)+\\
&\phantom{\mathrm d_{K_1\underline\otimes_B K_2}^{0,0}\!\left(s(k_1\otimes (b_1|\cdots|b_q)\otimes k_2)\right)=}+(-1)^{|k_1|+\sum_{j=1}^q(|b_j|-1)}\sum_{l=0}^q
s\!\left(k_1\otimes (b_1|\cdots|b_l)\otimes s^{-1}(\mathrm d_{K_2}^{q-l,0}(b_{l+1}|\cdots|b_q|k_2)\right).
\end{aligned}
\end{equation}
That the Taylor components~\eqref{eq-tayl-tens} truly describe an $A_\infty$-$A$-$C$-bimodule structure on $K_1\underline{\otimes}_B K_2$ can be checked by straightforward computations; a more conceptual proof has been given in~\cite[Proposition 3.3]{FRW}.
We observe that, if both $A$, $C$ are flat, the Taylor component $m_{K_1\underline\otimes_B K_2}^{0,0}$ yields a structure of dg vector space on $K_1\underline\otimes_B K_2$ (while $B$ may have non-trivial curvature).

Now, let $A$, $B$ be two $A_\infty$-algebras, and $K$ be an $A_\infty$-$A$-$B$-bimodule $K$.
We may thus form the $A_\infty$-$A$-$B$-bimodule $A\underline\otimes_A K$ (viewing $A$ as an $A_\infty$-$A$-$A$-bimodule); similarly, we may consider the $A_\infty$-$A$-$B$-bimodule $K\underline\otimes_B B$.

There is a natural $A_\infty$-morphism $\mu$ of $A_\infty$-$A$-$B$-bimodules from $A\underline\otimes_A K$ to $K$, whose Taylor components are given explicitly by
\begin{equation}\label{eq-bar-constr}
\begin{aligned}
&\mu^{m,n}(a_1|\cdots|a_m|a\otimes (\widetilde a_1|\cdots|\widetilde a_q)\otimes k|b_1|\cdots|b_n)=\\
&=(-1)^{\sum_{i=1}^m (|a_i|-1)+|a|+\sum_{j=1}^q(|\widetilde a_j|-1)}\mathrm d_K^{m+1+q,n}(a_1|\cdots|a_m|a|\widetilde a_1|\cdots|\widetilde a_q|k|b_1|\cdots|b_n),\quad m,n,q\geq 0.
\end{aligned}
\end{equation}
Similar formul\ae\ hold true for the case of the $A_\infty$-$A$-$B$-bimodule $K\underline\otimes_B B$.
\begin{Prop}\label{p-bar}
For two $A_\infty$-algebras $A$, $B$ and an $A_\infty$-$A$-$B$-bimodule $K$, there is a natural $A_\infty$-morphism $\mu$, defined by~\eqref{eq-bar-constr}, of $A_\infty$-$A$-$B$-bimodules from $A\underline\otimes_A K$ to $K$.

If $A$, $B$ are both flat, and $A$, $K$ are strictly (left-)unital, then the $A_\infty$-morphism~\eqref{eq-bar-constr} is an $A_\infty$-quasi-isomorphism.
\end{Prop}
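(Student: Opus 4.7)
The first claim — that $\mu$ defined by \eqref{eq-bar-constr} is an honest $A_\infty$-morphism of $A_\infty$-$A$-$B$-bimodules — is a direct computation. The plan is to expand the morphism equation by inserting \eqref{eq-tayl-tens} on the one hand and the definition of $\mu$ on the other, using that $A$ viewed as an $A_\infty$-$A$-$A$-bimodule has Taylor components $\mathrm d_A^{m,n}=\mathrm d_A^{m+n+1}$. After matching summands with Koszul signs, each relation collapses onto an $A_\infty$-bimodule structure equation for $K$; a more conceptual bar-coalgebra argument along the lines of \cite[Proposition 3.3]{FRW} would also work.

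For the second claim, since $A$ and $B$ are flat, the $(0,0)$-Taylor components turn $A\underline\otimes_A K$ and $K$ into ordinary cochain complexes, and an $A_\infty$-morphism of $A_\infty$-bimodules over two flat $A_\infty$-algebras is a quasi-isomorphism iff its $(0,0)$-Taylor component is a quasi-isomorphism of complexes. I therefore aim to exhibit an explicit deformation retraction of $A\underline\otimes_A K$ onto $K$ built from the strict units. Define
$$\iota:K\to A\underline\otimes_A K,\qquad \iota(k)=1\otimes ()\otimes k,$$
where $1$ is the strict unit of $A$ and $()$ the empty word in $\mathrm T(A[1])$. Strict left-unitality of $K$ immediately gives $\mu^{0,0}\circ\iota=\mathrm{id}_K$, as the only surviving summand is $\mathrm m_K^{1,0}(1,k)=k$.

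For the other composition, introduce the degree $-1$ operator
$$h\bigl(a\otimes (a_1|\cdots|a_q)\otimes k\bigr)\;=\;\pm\,1\otimes (a|a_1|\cdots|a_q)\otimes k,$$
with an appropriate Koszul sign, which shifts the leftmost $A$-factor into the bar. The core step is then to verify the homotopy identity
$$\bigl[\mathrm d^{0,0}_{A\underline\otimes_A K},h\bigr]\;=\;\mathrm{id}_{A\underline\otimes_A K}-\iota\circ\mu^{0,0}$$
by expanding both sides via \eqref{eq-tayl-tens}: all terms in which the unit $1$ is fed into $\mathrm m_A^n$ with $n\neq 2$, or into $\mathrm m_K^{m,n}$ with $m\neq 1$, vanish by strict unitality; the terms with $\mathrm m_A^2(1,-)=\mathrm{id}$ telescope, the outermost one producing $\mathrm{id}_{A\underline\otimes_A K}$; and the tail contribution through $\mathrm m_K^{1,0}(1,-)$ reproduces $\iota\circ\mu^{0,0}$ exactly.

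The main obstacle I anticipate is the sign and combinatorial bookkeeping: \eqref{eq-tayl-tens} is a multi-indexed sum with several Koszul signs, so verifying the homotopy identity demands a patient term-by-term comparison and delicate matching of the signs coming from \eqref{eq-bar-constr}. Conceptually, however, the computation is exactly the classical extra-degeneracy argument for contractibility of the two-sided bar resolution, with strict unitality of $A$ and strict left-unitality of $K$ providing precisely the inputs needed to dispose of all higher-order corrections; flatness of $A$ and $B$ is used only to guarantee that being a quasi-isomorphism can be checked at the level of $(0,0)$-components.
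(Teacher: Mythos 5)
Your proposal is correct and follows essentially the same route as the proof the paper points to (the paper itself defers to~\cite{FRW}, Subsection 3.1, rather than proving Proposition~\ref{p-bar} in-house): the first part by direct expansion of the morphism equation using $\mathrm d_A^{m,n}=\mathrm d_A^{m+n+1}$, and the second part by the classical extra-degeneracy contraction $h(a\otimes(a_1|\cdots|a_q)\otimes k)=\pm\,1\otimes(a|a_1|\cdots|a_q)\otimes k$ together with the section $\iota(k)=1\otimes()\otimes k$, with flatness of $A$ and $B$ used exactly as you say, namely to make $\mathrm d^{0,0}$ a genuine differential so that quasi-isomorphism can be tested on the $(0,0)$-component. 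The only items left implicit are routine: the sign in $h$ and the verification that the telescoping of the $\mathrm d_A^{l+1}(1|\cdots)$-terms leaves precisely the identity (only $l=1$ survives by strict unitality), both of which work out as you indicate.
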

We refer to~\cite[Subsection 3.1]{FRW} for a proof of Proposition~\ref{p-bar}.



\section{The Chevalley--Eilenberg complex of a finite-dimensional Lie algebra $\mathfrak g$}\label{s-2}
Let $\mathfrak g$ be as in Section~\ref{s-0}.
Its Chevalley--Eilenberg chain complex $\mathrm{CE}(\mathfrak g)$ is defined as 
\[
\mathrm{CE}^q(\mathfrak g)=\begin{cases}
\mathrm U(\mathfrak g)\otimes\wedge^{-q}(\mathfrak g),& q\leq 0,\\
\{0\},& q>0.
\end{cases}
\]
Observe that we use a non-positive grading on $\mathrm{CE}(\mathfrak g)$, making it actually into a cochain complex (this is actually different from the classical convention, but we prefer to deal with a differential of degree $1$); still, $\mathrm{CE}(\mathfrak g)$ is referred to as the Chevalley--Eilenberg chain complex in the main literature. 

We now make use of the identifications $\wedge(\mathfrak g)\cong \mathbb K[\theta_i]$ and $\wedge(\mathfrak g^*)\cong \mathbb K[\partial_{\theta_i}]$, $i=1,\dots,d$, $d=\dim\mathfrak g$, where $\theta_i$ is an odd variable of degree $-1$, {\em i.e.} $\theta_i\theta_j=-\theta_j\theta_i$, for $i$, $j$ in $\{1,\dots,d\}$.
Consequently, the derivative $\partial_{\theta_i}$ has degree $1$, acts on generators of $\mathbb K[\theta_i]$ {\em via} $\partial_{\theta_i}\theta_j=\delta_{ij}$ and further satisfies the graded Leibniz rule (from the left)
\[
\partial_{\theta_i}(f_1 f_2)=(\partial_{\theta_i}f_1)f_2+(-1)^{|f_1|}f_1(\partial_{\theta_i}f_2),\ f_j\in \mathbb K[\theta_i],\ j=1,2. 
\] 
In particular, we have the identification $\mathrm{CE}(\mathfrak g)=\mathrm U(\mathfrak g)[\theta_i]$ of graded vector spaces.

The Chevalley--Eilenberg differential $\mathrm d$ on $\mathrm{CE}(\mathfrak g)$ is given by
\begin{equation}\label{eq-CE-d}
\mathrm d_\mathrm{CE}(u(\theta_{i_1}\cdots \theta_{i_q}))=\sum_{k=1}^q(-1)^{k+1}(u\cdot x_{i_k})(\theta_{i_1}\cdots \widehat{\theta_{i_k}}\cdots \theta_{i_q})+\sum_{1\leq k<l\leq q}(-1)^{k+l}f_{i_k,i_l}^j u(\theta_{j}\theta_{i_1}\cdots\widehat{\theta_{i_k}}\cdots\widehat{\theta_{i_l}}\cdots \theta_{i_q}),  
\end{equation}
where $\cdot$ denotes the associative product on $\mathrm U(\mathfrak g)$, and $f_{ij}^k$ denote the structure constants of $\mathfrak g$ w.r.t.\ the chosen basis.

The associative algebra structure on $\mathrm U(\mathfrak g)$ makes $\mathrm{CE}(\mathfrak g)$ into a left $\mathrm U(\mathfrak g)$-module in an obvious way: we denote the left action by $\mathrm m_L$, thus
\begin{equation}\label{eq-CE-l}
\mathrm U(\mathfrak g)\otimes \mathrm{CE}(\mathfrak g)\ni u_1\otimes (u\theta^I)\overset{\mathrm m_L}\mapsto (u_1\cdot u)\theta^I\in\mathrm{CE}(\mathfrak g),
\end{equation}
for some ordered multi-index $I$.

Now, $\mathbb K[\partial_{\theta_i}]$ acts naturally on $\mathrm{CE}(\mathfrak g)$ from the left: therefore, we may turn $\mathrm{CE}(\mathfrak g)$ into a right $\wedge(\mathfrak g^*)$-module simply {\em via}
\begin{equation}\label{eq-CE-r}
\mathrm{CE}(\mathfrak g)\otimes \wedge(\mathfrak g^*)\cong \mathrm U(\mathfrak g)[\theta_i]\otimes\mathbb K[\partial_{\theta_i}]\ni (u\theta^I)\otimes \partial_\theta^J\overset{\mathrm m_R}\mapsto (-1)^{|I||J|} u(\partial_\theta^J(\theta^I))\in \mathrm U(\mathfrak g)[\theta_i]\cong \mathrm{CE}(\mathfrak g), 
\end{equation}
for two ordered multi-indices $I$, $J$.

Observe that, since $\mathfrak g$ is a Lie algebra, $\wedge(\mathfrak g^*)\cong\mathbb K[\partial_{\theta_i}]$ is endowed with a map of degree $1$, which we denote by $\mathrm d$, which is uniquely defined on the generators $\partial_{\theta_i}$ by the formula $\mathrm d(\partial_{\theta_i})=f_{jk}^i\partial_{\theta_j}\partial_{\theta_k}$ extended to the whole algebra by means of the graded Leibniz rule (from the left): the Jacobi identity for the Lie bracket on $\mathfrak g$ implies that $\mathrm d$ squares to $0$.

We recall the augmentation map $\varepsilon$ from $\mathrm U(\mathfrak g)$ to $\mathbb K$.
\begin{Prop}\label{p-CE-bimod}
The morphisms~\eqref{eq-CE-d},~\eqref{eq-CE-l} and~\eqref{eq-CE-r} endow $\mathrm{CE}(\mathfrak g)$ with a dg $(\mathrm U(\mathfrak g),\cdot)$-$(\wedge(\mathfrak g^*),\mathrm d,\wedge)$-bimodule structure; further, the augmentation map $\varepsilon$ defines in an obvious way a quasi-isomorphism of dg bimodules from $\mathrm{CE}(\mathfrak g)$ to $\mathbb K$, where the latter is endowed with the obvious structure of $(\mathrm U(\mathfrak g),\cdot)$-$(\wedge(\mathfrak g^*),\mathrm d,\wedge)$-bimodule.
\end{Prop}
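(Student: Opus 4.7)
The plan is to verify the dg-bimodule axioms one by one, then derive the assertion about $\varepsilon$ from the classical fact that $\mathrm{CE}(\mathfrak g)$ is a free $\mathrm U(\mathfrak g)$-resolution of the trivial module $\mathbb K$. First, I would observe that left multiplication by $\mathrm U(\mathfrak g)$ and right contraction by $\wedge(\mathfrak g^*)$ act on disjoint tensor factors of $\mathrm U(\mathfrak g)\otimes \wedge(\mathfrak g)$, so their associativity is immediate and they automatically commute. Since $\mathrm U(\mathfrak g)$ carries the zero differential, the Leibniz rule on the left reduces to $\mathrm U(\mathfrak g)$-linearity of $\mathrm d_{\mathrm{CE}}$, which is manifest from~\eqref{eq-CE-d} because $\mathrm d_{\mathrm{CE}}$ only ever right-multiplies the $\mathrm U(\mathfrak g)$-factor (by generators), and right and left multiplications commute by associativity of $\cdot$.

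The first non-trivial step is $\mathrm d_{\mathrm{CE}}^2=0$. I would decompose $\mathrm d_{\mathrm{CE}}=\mathrm d'+\mathrm d''$, with $\mathrm d'$ the first sum in~\eqref{eq-CE-d} (right-multiplication by $x_{i_k}$, removing one $\theta$) and $\mathrm d''$ the second (contraction by the structure constants, replacing two $\theta$'s by one). Then $(\mathrm d')^2=0$ becomes the identity $[x_{i_k},x_{i_l}]=x_{i_k}\cdot x_{i_l}-x_{i_l}\cdot x_{i_k}=f_{i_k,i_l}^j x_j$ combined with the antisymmetry of the wedge; this identity is exactly what kills $(\mathrm d')^2$ only up to the cross terms with $\mathrm d''$, so in fact $((\mathrm d')^2+\mathrm d'\mathrm d''+\mathrm d''\mathrm d')=0$ follows from the defining relation of $\mathrm U(\mathfrak g)$; finally $(\mathrm d'')^2=0$ amounts to the Jacobi identity for the structure constants $f_{ij}^k$. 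This is the standard Koszul-type computation for the inhomogeneous quadratic algebra $\mathrm U(\mathfrak g)$ alluded to in the introduction.

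The more delicate identity is the right Leibniz rule $\mathrm d_{\mathrm{CE}}(x\cdot\omega)=\mathrm d_{\mathrm{CE}}(x)\cdot\omega+(-1)^{|x|}x\cdot\mathrm d(\omega)$. By $\mathrm U(\mathfrak g)$-linearity of all maps and by the fact that both sides are biderivations with respect to the product on $\wedge(\mathfrak g^*)$, it suffices to check the case $x=u\,\theta_{i_1}\cdots\theta_{i_q}$ and $\omega=\partial_{\theta_k}$. The left-hand side then unfolds into two families of terms coming from~\eqref{eq-CE-d}, while the right-hand side produces the matching generator-multiplication terms from $\mathrm d_{\mathrm{CE}}(x)\cdot\partial_{\theta_k}$ together with a unique new contraction term $(-1)^{|x|}x\cdot f_{ab}^k\partial_{\theta_a}\partial_{\theta_b}$. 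A careful sign comparison, using the prescribed sign $(-1)^{|I||J|}$ in~\eqref{eq-CE-r}, matches the two sides: the main obstacle is precisely this sign bookkeeping, which I would organise by writing out both sides in the basis $\theta^I$ and tracking the positions of the removed indices.

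Finally, the augmentation $\varepsilon\colon\mathrm{CE}(\mathfrak g)\to \mathbb K$ is the projection onto $\wedge^0(\mathfrak g)$ composed with the augmentation $\mathrm U(\mathfrak g)\to \mathbb K$. On the right-hand side, $\wedge(\mathfrak g^*)$ acts on $\mathbb K$ through its projection to $\wedge^0(\mathfrak g^*)=\mathbb K$, so $\varepsilon$ is trivially right-linear, because any $\partial_{\theta_j}$ either lowers the $\theta$-degree (producing something projecting to $0$) or annihilates the degree-$0$ summand. Left linearity is immediate. That $\varepsilon$ is a quasi-isomorphism is classical: $\mathrm{CE}(\mathfrak g)=\mathrm U(\mathfrak g)\otimes\wedge(\mathfrak g)$ with the differential~\eqref{eq-CE-d} is the Koszul (or Chevalley--Eilenberg) resolution of the trivial $\mathrm U(\mathfrak g)$-module $\mathbb K$, a statement I would simply cite from Cartan--Eilenberg or from~\cite{LV}. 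This closes the proof.
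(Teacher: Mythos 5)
Your proposal is correct and follows essentially the same route as the paper: direct verification of $\mathrm d_{\mathrm{CE}}^2=0$ via the commutator relation in $\mathrm U(\mathfrak g)$ and the Jacobi identity, the observation that left and right actions commute because the right action is by $\mathrm U(\mathfrak g)$-linear operators, a generator-level check of the right Leibniz rule, and a citation of the classical fact that $\varepsilon$ is a quasi-isomorphism. The only (harmless) organizational difference is that you reduce the right Leibniz rule to the generators $\partial_{\theta_k}$ of $\wedge(\mathfrak g^*)$ using the derivation property of $\mathrm d$ on $B$, whereas the paper verifies the corresponding identity directly for arbitrary monomials $\partial_\theta^J$.
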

\begin{proof}
The fact that~\eqref{eq-CE-d} squares to $0$ is an elementary check using the Jacobi identity for $\mathfrak g$ and the fact that the commutator of two elements of $\mathfrak g$ inside $\mathrm U(\mathfrak g)$ equals their Lie bracket (as an element of $\mathfrak g$); since $\mathrm U(\mathfrak g)$ is concentrated in degree $0$, one has only to verify that the left action~\eqref{eq-CE-l} is $\mathrm U(\mathfrak g)$-linear, which is clear from its definition.

It remains to prove the following identity
\[
\mathrm d_\mathrm{CE}(\mathrm m_R((u\theta^I)\otimes \partial_\theta^J)=\mathrm m_R(\mathrm d_\mathrm{CE}(u\theta^I)\otimes \partial_\theta^J)+(-1)^{|I|}\mathrm m_R((u\theta^I)\otimes \mathrm d(\partial_\theta^J)),\ u\theta^I\in \mathrm{CE}(\mathfrak g),\ \partial_\theta^J\in\wedge(\mathfrak g^*),
\]
where we have used the identifications $\mathrm{CE}(\mathfrak g)\cong \mathrm U(\mathfrak g)[\theta_i]$ and $\wedge(\mathfrak g^*)\cong\mathbb K[\partial_{\theta_i}]$.
The proof of the previous identity is equivalent to the proof that the left action of $\mathbb K[\partial_{\theta_i}]$ on $\mathrm U(\mathfrak g)[\theta_i]$ by $\mathrm U(\mathfrak g)$-linear translation-invariant differential operators is compatible with the corresponding differentials.

The $\mathrm U(\mathfrak g)$-linearity of the translation invariant differential operators implies that it suffices to show 
\begin{equation}\label{eq-left-diff}
\mathrm d_\mathrm{CE}(\partial_\theta^I(\theta^J))=(\mathrm d(\partial_\theta^I))(\theta^J)+(-1)^{|J|}\partial_\theta^I(\mathrm d_\mathrm{CE}\theta^J),
\end{equation}
for two multi-indices $I=(i_1,\dots,i_p)$ and $J=(j_1,\dots,j_q)$, such that $1\leq i_1<\cdots<i_p\leq d$, $1\leq j_1<\cdots<j_q\leq d$ and $\partial_\theta^I=\partial_{\theta_{i_1}}\cdots\partial_{\theta_{i_p}}$, $\theta^J=\theta_{j_1}\cdots\theta_{j_q}$.
Identity~\eqref{eq-left-diff} is trivially satisfied if $p=|I|>|J|+1=q+1$: thus, we assume $p\leq q+1$.

Explicitly, we get
\[
\mathrm d(\partial_\theta^I)=\sum_{I_1\subseteq I\atop |I_1|=1}\varepsilon(I_1,I)f_{jk}^{I_1}\partial_{\theta}^{\{j,k\}\sqcup I\smallsetminus I_1},\ \mathrm d_\mathrm{CE}(\theta^J)=\sum_{J_1\subseteq J\atop |J_1|=1}\varepsilon(J_1,J)x_{J_1}\theta^{I\smallsetminus I_1}+\sum_{J_1,J_2\subseteq J\atop |J_1|=|J_2|=1,\ J_1<J_2}\varepsilon(J_1,J_2,J)f_{J_1,J_2}^k\theta^{\{k\}\sqcup I\smallsetminus I_1},
\]
where {\em e.g.} the sign $\varepsilon(I_1,I)$, for an ordered subset $I_1$ of $I$, is uniquely specified by $\theta^I=\varepsilon(I_1,I)\theta^{I_1}\theta^{I\smallsetminus I_1}$. 
Identity~\eqref{eq-left-diff} follows then from the previous two expressions by a direct computation.

The left $(\mathrm U(\mathfrak g),\cdot)$-action and the right $(\wedge(\mathfrak g^*),\mathrm d,\wedge)$-action on $\mathrm{CE}(\mathfrak g)$ are compatible in virtue of the aforementioned $\mathrm U(\mathfrak g)$-linearity of the right $\wedge(\mathfrak g^*)$-action.

See {\em e.g.}~\cite[Theorem 7.7.2]{Weib} for a proof of the fact that $\varepsilon$ is a quasi-isomorphism.
\end{proof}
In particular, Proposition~\ref{p-CE-bimod} implies that $\mathrm{CE}(\mathfrak g)$ inherits a structure of $A_\infty$-$(\mathrm U(\mathfrak g),\cdot)$-$(\wedge(\mathfrak g^*),\mathrm d,\wedge)$-bimodule, whose cohomology is concentrated in degree $0$ and equals $\mathbb K$.

\subsection{The Chevalley--Eilenberg complex as a Koszul complex}\label{ss-2-1}
We highlight in the present subsection the fact that $\mathrm{CE}(\mathfrak g)$, for $\mathfrak g$ as above, is the Koszul complex of $\mathrm U(\mathfrak g)$.

First of all, $\mathrm U(\mathfrak g)$ is a quadratic-linear algebra using the language of~\cite[Section 3.6]{LV}, {\em i.e.} it is the quotient of a free algebra w.r.t.\ the two-sided ideal generated by $R$, a linear subspace of $\mathfrak g\oplus \mathfrak g^{\otimes 2}$.

The construction of Koszul resolutions for quadratic-linear algebras traces back to~\cite{Pr}, who regards the Chevalley--Eilenberg complex $\mathrm{CE}(\mathfrak g)$ as the Koszul complex of $\mathrm U(\mathfrak g)$; the general theory of Koszul duality for quadratic-linear algebras may be found in~\cite[Section 3.6]{LV}.
We recall in the following some relevant features thereof without entering into the details.

In the case at hand, the two-sided ideal $R$ is spanned by elements of the form $x_i\otimes x_j-x_j\otimes x_i-[x_i,x_j]$, for a basis $\{x_i\}$ of $\mathfrak g$ as above.
Observe that the linear term can be regarded as the image of $x_i\otimes x_j-x_j\otimes x_i$ in $\mathfrak g^{\otimes 2}$ w.r.t.\ the Lie bracket onto $\mathfrak g$.

The Koszul dual dg coalgebra of $\mathrm U(\mathfrak g)$ identifies as a graded vector space with $\wedge(\mathfrak g)$: a more proper notation for the Koszul dual coalgebra would be $\wedge^c(\mathfrak g)$ to highlight the fact that $\wedge(\mathfrak g)$ is endowed with the natural shuffle coproduct.
The codifferential $\mathrm d_2$ on $\wedge^c(\mathfrak g)$ is induced by the Lie bracket, which determines the linear term in quadratic-linear relations for $\mathrm U(\mathfrak g)$: it is given by the second term on the right-hand side of~\eqref{eq-CE-d}, setting $u=1$.
We observe that in the standard theory of Koszul resolutions, $\wedge^c(\mathfrak g)$ would be positively graded and the codifferential would have degree $-1$: we prefer to use the non-standard non-positive grading to make the codifferential of degree $1$.

The Koszul dual dg algebra of $\mathrm U(\mathfrak g)$ is the graded dual algebra of $\left(\wedge^c(\mathfrak g),\Delta,\mathrm d_2\right)$: it readily identifies with the cohomological Chevalley--Eilenberg complex $\left(\wedge(\mathfrak g^*),\wedge,\mathrm d\right)$.
The grading on $\left(\wedge(\mathfrak g^*),\wedge,\mathrm d\right)$ is non-negative by our non-standard choice of grading on the Koszul dual dg coalgebra.

The first term $\mathrm d_1$ on the right-hand side of~\eqref{eq-CE-d} admits an interpretation in the theory of Koszul duality for quadratic-linear algebras: the identity map of $\mathfrak g$ induces an isomorphism of degree $-1$ from $\mathfrak g$ to $\mathfrak g[1]$, hence a morphism $\kappa$ from $\wedge(\mathfrak g)$ to $\mathrm U(\mathfrak g)$, where $\wedge(\mathfrak g)=\wedge^c(\mathfrak g)$ but with non-positive grading {\em via}
\[
\wedge(\mathfrak g)=\mathrm S(\mathfrak g[1])\twoheadrightarrow \mathfrak g[1]\cong \mathfrak g\hookrightarrow \mathrm U(\mathfrak g),
\]
where the first map is simply the natural projection onto the piece of degree $-1$.
The map $\kappa$ is a twisting cocycle as defined in~\cite[Chapter 2]{LV}, and $\mathrm d_1=\mathrm d_\kappa$, the twisted differential induced by $\kappa$.

We finally observe that $\wedge(\mathfrak g)$ is a dg subcoalgebra of the bar complex of $\mathrm U(\mathfrak g)$: the natural inclusion from $\wedge(\mathfrak g)\hookrightarrow \mathrm T(\mathrm U(\mathfrak g))$ will be revisited in Section~\ref{s-5}, Formula~\eqref{eq-skew}.

\subsection{The Chevalley--Eilenberg complex as a deformation quantization of the Koszul complex of $\mathrm S(\mathfrak g)$}\label{ss-2-2}
We discuss here briefly an approach to the Chevalley--Eilenberg complex $\mathrm{CE}(\mathfrak g)$ in the framework of deformation quantization: its has been suggested to us by M.~Duflo, but whose origin traces back to P.~Cartier according to M.~Duflo.

For a Lie algebra $\mathfrak g$ over $\mathbb K$ as above, we consider the graded vector space $\widehat{\mathfrak g}=\mathfrak g\oplus\mathfrak g[1]$ concentrated in degrees $-1$, $0$.
The adjoint representation of $\mathfrak g$ on itself endows $\widehat{\mathfrak g}$ with the structure of a graded Lie algebra.

Observe that the symmetric algebra $\mathrm S(\widehat{\mathfrak g})$ identifies canonically with the graded vector space $\mathrm S(\mathfrak g)\otimes\wedge(\mathfrak g)$ underlying the Koszul complex of $A=\mathrm S(\mathfrak g)$.

Since $\widehat{\mathfrak g}$ is a finite-dimensional graded Lie algebra, we may consider its UEA $\mathrm U(\widehat{\mathfrak g})$, which has an obvious structure of graded associative algebra: there is an isomorphism of graded vector spaces from $\mathrm S(\widehat{\mathfrak g})$ to $\mathrm U(\widehat{\mathfrak g})$, the PBW isomorphism.
If $\{x_i\}$ denotes a basis of $\mathfrak g$ and $\{\theta_i\}$ a basis of $\mathfrak g[1]$, we may consider $\{x_1,\dots,x_d,\theta_1,\dots,\theta_d\}$ as an ordered basis of $\widehat{\mathfrak g}$: this yields a PBW basis of $\mathrm U(\widehat{\mathfrak g})$, which in turn permits to identify $\mathrm U(\widehat{\mathfrak g})$ with $\mathrm U(\mathfrak g)\otimes\wedge(\mathfrak g)$. 

Obviously, $\mathfrak g$ is a Lie subalgebra of $\widehat{\mathfrak g}$, hence $\mathrm U(\mathfrak g)$ is a subalgebra of $\mathrm U(\widehat{\mathfrak g})$: the previous identification implies that $\mathrm U(\mathfrak g)\otimes\wedge(\mathfrak g)$ is a left $\mathrm U(\mathfrak g)$-module with the obvious module structure.

The Koszul differential on $\mathrm S(\widehat{\mathfrak g})$ can be twisted w.r.t.\ the PBW isomorphism: upon the identification $\mathrm U(\widehat{\mathfrak g})=\mathrm U(\mathfrak g)\otimes\wedge(\mathfrak g)$, the twisted Koszul differential $\mathrm d_\mathrm K$ equals the differential~\eqref{eq-CE-d}.
Namely, the twisted Koszul differential is a graded derivation of $\mathrm U(\mathfrak g)\otimes\wedge(\mathfrak g)$ of degree $1$, and it suffices to evaluate it on the generators $\{x_i\}$, $\{\theta_i\}$, whence
\[
\begin{aligned}
\mathrm d_\mathrm K(u(\theta_{i_1}\cdots\theta_{i_q}))&=\sum_{k=1}^q(-1)^{k-1}u(\theta_{i_1}\cdots \theta_{i_{k-1}}x_{i_k}\theta_{i_{k+1}}\cdots\theta_{i_q})=\\
&=\sum_{k=1}^q(-1)^{k-1}(u\cdot x_{i_k})(\theta_{i_1}\cdots \widehat{\theta_{i_k}}\cdots\theta_{i_q})+\sum_{1\leq k<l\leq q}(-1)^{k+l}f_{i_k,i_l}^j u(\theta_{j}\theta_{i_1}\cdots\widehat{\theta_{i_k}}\cdots\widehat{\theta_{i_l}}\cdots \theta_{i_q}),
\end{aligned}
\]
where we have used the graded commutation relations $[x_i,\theta_j]=f_{ij}^k\theta_k$ and $[\theta_i,\theta_j]=0$ in $\mathrm U(\widehat{\mathfrak g})=\mathrm U(\mathfrak g)\otimes\wedge(\mathfrak g)$.
Observe that the Koszul complex $\mathrm S(\widehat{\mathfrak g})$ is acyclic by means of an explicit homotopy: twisting this homotopy w.r.t.\ the PBW isomorphism yields an explicit homotopy for the Chevalley--Eilenberg complex of $\mathfrak g$.
(Of course, to write down the explicit homotopy on $\mathrm U(\mathfrak g)\otimes\wedge(\mathfrak g)$ requires some work.)

The right $\wedge(\mathfrak g^*)$-action on $\mathrm S(\widehat{\mathfrak g})$ defines in an obvious way a right $wedge(\mathfrak g^*)$-action on $\mathrm U(\widehat{\mathfrak g})$: the fact that the former action does not intertwine $\mathfrak g$ with $\mathfrak g[1]$ implies that the latter action identifies with~\eqref{eq-CE-r}.

Further, $\mathrm U(\widehat{\mathfrak g})$ is a (graded) cocommutative coalgebra: $\mathrm U(\mathfrak g)$ and $\wedge(\mathfrak g)$ are obvious cocommutative subcoalgebras.

Thus, the previous arguments show a simple way to obtain the Chevalley--Eilenberg complex of $\mathfrak g$ from the Koszul complex of the graded Lie algebra $\widehat{\mathfrak g}$ by means of deformation quantization: namely, deformation quantization {\em \`a la} Kontsevich works in the graded case as well

\subsection{The dg bimodule structure of the Koszul complex}\label{ss-2-3}
We want finally to briefly discuss a nice, general feature of the Koszul complex $\mathrm K(A)$ of a quadratic algebra $A$ generated by a finite-dimensional vector space $V$ over $\mathbb K$.
\begin{Prop}\label{p-kosz-bim}
Let $A=\mathrm T(V)/R$ be a quadratic algebra generated by a finite-dimensional vector space $V$ over $\mathbb K$; $R$ denotes the two-sided ideal generated by a subspace $R$ of $V^{\otimes 2}$.
Then, the Koszul complex $\mathrm K(A)$ of $A$ has a structure of dg $A$-$A^!$-bimodule, where $A^!$ is the Koszul dual algebra of $A$.
\end{Prop}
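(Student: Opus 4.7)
My plan is to realize $\mathrm K(A)$ as the familiar graded tensor product $A\otimes C$, where $C$ denotes the Koszul dual coalgebra of $A$ (so that $A^!$ is canonically identified with the graded dual of $C$), and to endow this object with a right $A^!$-action coming from the natural convolution action of the graded dual of a coalgebra on the coalgebra itself.

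First, I would recall that, since $V$ is finite-dimensional, each weight-graded piece of $C$ is finite-dimensional, so the graded dual of $C$ is a well-defined graded algebra canonically identified with $A^!$. Dualizing the coproduct $\Delta$ of $C$ then produces, in the usual way, a right $A^!$-action on $C$ via the convolution formula
\[
c\cdot \alpha\;=\;\sum_{(c)} c_{(1)}\,\alpha(c_{(2)}),
\]
whose associativity is literally dual to coassociativity of $\Delta$. I would extend this action to $\mathrm K(A)=A\otimes C$ by letting $A^!$ act only on the right-hand factor; compatibility with the left $A$-action (which operates only on the left factor) is then automatic.

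The only nontrivial step is to verify that the Koszul differential $d_\kappa$ is right $A^!$-linear; since $A^!$ carries the zero differential, this is exactly what the graded Leibniz rule reduces to here. Writing $d_\kappa(a\otimes c)=\sum_{(c)} a\cdot\kappa(c_{(1)})\otimes c_{(2)}$ in terms of the canonical twisting cochain $\kappa\colon C\to A$ induced by the identity on $V$, both $d_\kappa\bigl((a\otimes c)\cdot \alpha\bigr)$ and $d_\kappa(a\otimes c)\cdot \alpha$ expand, after one application of $\Delta$ and one evaluation of $\alpha$, to the common expression
\[
\sum_{(c)} \alpha(c_{(3)})\,a\cdot\kappa(c_{(1)})\otimes c_{(2)},
\]
their identification being a single use of coassociativity.

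The hard part will not be conceptual but the bookkeeping: $C$ sits in shifted (non-positive, in the convention of the paper) weight degrees so that $d_\kappa$ has degree $+1$, and the Koszul sign rule must be tracked carefully when permuting $\alpha$ past $c_{(1)}$ and $\kappa(c_{(1)})$; the right action also has to be consistently defined with the opposite or standard convolution product on $A^!$, depending on which leg of $\Delta$ is evaluated. A further remark worth making is that the construction extends verbatim to the quadratic-linear setting of Subsection~\ref{ss-2-1}: the internal codifferential $\mathrm d_2$ appearing in $d_\kappa$ for the quadratic-linear case is, by construction, a coderivation of $C$, hence automatically commutes with the dualized $A^!$-action, so Proposition~\ref{p-kosz-bim} applies equally to the Chevalley--Eilenberg complex $\mathrm{CE}(\mathfrak g)$ viewed as a dg $\mathrm U(\mathfrak g)$-$\wedge(\mathfrak g^*)$-bimodule.
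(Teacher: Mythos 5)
Your proposal is correct and follows essentially the same route as the paper's own sketch: both realize $\mathrm K(A)=A\otimes A^{\text{!`}}$, define the right $A^!$-action by applying the coproduct of the Koszul dual coalgebra and evaluating one leg against $A^!$ (with the opposite-product caveat you both flag), and deduce associativity of the action and its compatibility with the Koszul differential from coassociativity. Your Sweedler-notation/twisting-cochain phrasing of $d_\kappa$ is just a notational variant of the paper's composite-map formulation, and your closing remark on the quadratic-linear case matches the paper's comment following the proof.
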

\begin{proof}
We give a sketch of the proof.

If $A=\mathrm T(V)/R$, the Koszul complex $\mathrm K(A)$ can be written as $\mathrm K(A)=A\otimes A^{\text{!`}}$, where $B$ is the Koszul dual coalgebra of $A$.
More explicitly, $A^{\text{!`}}$ is a subcoalgebra of the cofree coassociative tensor coalgebra $\mathrm T^c(V)$ with counit cogenerated by $V$ given by
\[
(A^{\text{!`}})^0=\mathbb K,\ (A^{\text{!`}})^1=V,\ \mathrm T^{n,c}(V)\supseteq (A^{\text{!`}})^n=\bigcap_{i=0}^{n-2}V^{\otimes i}\otimes R\otimes V^{\otimes(n-2-i)},\ n\geq 2.
\]
We write $\Delta$ for the coproduct on $B$: the opposite coproduct $\Delta^\mathrm{op}$ is simply the composition of $\Delta$ with the natural twist on $A^{\text{!`}}\otimes A^{\text{!`}}$.
We denote by $\Delta^+$ the coproduct the cofree coassociative tensor coalgebra $\mathrm T^{c,+}(V)=\bigoplus_{n\geq 1} V^{\otimes n}$ without counit.

The Koszul dual algebra $A^!$ of $A$ is defined as the dual of $(A^{\text{!`}},\Delta^\mathrm{op})$: as a graded vector space, it identifies with $\mathrm T(V^*)/R^\perp$, where $R^\perp$ denotes the two-sided ideal of $\mathrm T(V^*)$ generated by the annihilator of $R$ in $V^*\otimes V^*$.
We observe that we use a different convention for the product on $A^!$: in fact, we use the opposite of the natural product in order to get a right $A^!$-action on $\mathrm K(A)$.

We define a map from $\mathrm K(A)\otimes A^!$ to $A^!$ {\em via} the composite
\[
\xymatrix{\mathrm K(A)\otimes A^!=A\otimes A^{\text{!`}}\otimes A^!\ar[rr]^-{1\otimes \Delta\otimes 1} & & A\otimes A^{\text{!`}}\otimes A^{\text{!`}}\otimes A^!\ar[rr]^-{1\otimes 1\otimes \mathrm{ev}} & & A\otimes A^{\text{!`}}=\mathrm K(A)},
\]
where $\mathrm{ev}$ denotes the duality pairing between $A^{\text{!`}}$ and $A^!$.
By the very definition of the multiplication in $A^!$ follows that the previous composite map defines a right $A^!$-action on $\mathrm K(A)$, which is obviously compatible with the left $A$-action.

It remains to prove that the above right $A^!$-action is compatible with the Koszul differential.
To see this, it is better to re-write the Koszul differential on $\mathrm K^{-n}(A)$, $n\geq 2$, as  
\[
\xymatrix{\mathrm K^{-n}(A)=A\otimes (A^{\text{!`}})^{-n}\ar[r]^-{1\otimes \Delta^+} & \bigoplus_{p+q=n\atop p,q\geq 1}A\otimes (A^{\text{!`}})^{-p}\otimes (A^{\text{!`}})^{-q}\ar@{->>}[r] & A\otimes V \otimes (A^{\text{!`}})^{-n+1}\ar[r]^-{\mu_A\otimes 1} & A\otimes (A^{\text{!`}})^{-n+1}=\mathrm K^{-n+1}(A),\ n\geq 2,}
\]
where $\mu_A$ denotes the multiplication in $A$ and the second morphism in the second line is the projection from $A^{\text{!`}}\subseteq \mathrm T^{c,+}(V)$ onto $A$: it annihilates any term on the left-hand side of $A^{\text{!`}}\otimes A^{\text{!`}}$ of degree strictly smaller than $-1$ by the very definition of $A$ and $B$.
The Koszul differential from $\mathrm K^{-1}(A)$ to $\mathrm K^0(A)$ is induced by $\mu_A$.

The coassociativity of both $\Delta$ and $\Delta^+$ easily yields the compatibility between the Koszul differential and the right $B$-action.
\end{proof}
Slight modifications of the arguments in the proof of Proposition~\ref{p-kosz-bim} yield a similar statement for quadratic-linear algebras: in particular, this applies to $\mathrm U(\mathfrak g)$ and its Koszul complex $\mathrm{CE}(\mathfrak g)$.

\section{The $A_\infty$-bimodule structure over $K=\mathbb K$}\label{s-3}
For $\mathfrak g$ as in Section~\ref{s-0}, we consider the two commutative algebras $A=\mathrm S(\mathfrak g)$ and $B=\wedge(\mathfrak g^*)$.
Observe that $A$ is concentrated in degree $0$, while $B$ is non-negatively graded; in particular, both $A$ and $B$ may be regarded as flat $A_\infty$-algebras.

Set $K=\mathbb K$: the natural augmentation maps on $A$ and $B$ make $K$ into a left $A$-module and right $B$-module respectively.

According to~\cite{CFFR}, $K$ is further endowed with a non-trivial $A_\infty$-$A$-$B$-bimodule structure: its restriction on the left-hand and on the right-hand side yields the previous natural left $A$- and right $B$-action respectively: in other words, if we denote by $\mathrm d_K^{m,n}$, $m,n\geq 0$, the Taylor components of the $A_\infty$-$A$-$B$-bimodule structure on $K$, we have
\[
\mathrm d_K^{m,0}=0,\ m\neq 1,\ \mathrm d_K^{0,n}=0,\ n\neq 1,
\]
while $\mathrm d_K^{1,0}$ and $\mathrm d_K^{0,1}$ are induced by the left $A$- and right $B$-action.

Non-triviality means, on the other hand, that the Taylor components $\mathrm d_K^{m,n}$, for both $m$, $n$ non-trivial, is non-trivial, {\em e.g.}
\[
\mathrm m_K^{1,1}(a\otimes 1\otimes b)=\begin{cases}
\langle b,a \rangle,& a\in \mathfrak g,\ b\in B_1=\mathfrak g^*,\\
0,& \text{otherwise},
\end{cases}
\]
and $\langle\bullet,\bullet\rangle$ denotes the duality pairing between $\mathfrak g^*$ and $\mathfrak g$.

We refer to~\cite[Subsection 6.2]{CFFR} and in particular for the present situation to~\cite[Subsections 3.1-3.3]{CRT} for a detailed exposition of the $A_\infty$-$A$-$B$-structure on $K$.
We content ourselves here to recall the features which are relevant for later computations, referring to {\em loc.~cit.} for proofs, motivations and discussions.

\subsection{Admissible graphs, configuration spaces and superpropagators}\label{ss-3-1}
First of all, the Taylor components $\mathrm d_K^{m,n}$ of the $A_\infty$-$A$-$B$-bimodule structure on $K$ are described pictorially by the so-called admissible graphs, which we now describe in generality.

Let $Q^{+,+}$ denote the interior of the first quadrant in $\mathbb C$.
For a triple of non-negative integers $(n,k,l)$, such that $2n+k+l-1\geq 0$ (the meaning of the previous inequality will be clarified later on), an admissible graph $\Gamma$ of type $(n,k,l)$ is a directed graph, whose set of vertices lies in $Q^{+,+}\sqcup i\mathbb R^+\sqcup \mathbb R^+$: more precisely, $\Gamma$ admits $n$ distinct vertices of the first type in $Q^{+,+}$, $k$, resp.\ $l$, ordered vertices of the second type on $i\mathbb R^+$, resp.\ $\mathbb R^+$.
See Figure $1$ for an example of an admissible graph of type $(5,1,0)$.
Observe that the ordering of the $l$ vertices of the second type on $\mathbb R^+$ is natural, while the ordering of the $k$ vertices of the second type on $i\mathbb R^+$ is defined {\em via}
\[
i y_1<\cdots <iy_k\Leftrightarrow y_1>\cdots> y_k.
\]
Moreover, $\Gamma$ admits multiple edges and short loops, {\em i.e.} there may be more than one directed edge connecting two distinct vertices of $\Gamma$ (the direction of all such edges is the same) and there may be directed edges, whose starting point coincides with the endpoint, respectively.
In the present situation, as we will see later on, admissible graphs do not admit multiple edges but may admit short loops.
We denote by $\mathcal G_{n,k,l}$ the set of admissible graphs of type $(n,k,l)$; see also~\cite[Subsubsection 3.3.1]{CRT}.

W.r.t.\ the choice of a basis $\{x_i\}$ of $\mathfrak g$ as in Section~\ref{s-0}, we identify $A$ with $\mathbb K[x_i]$; similarly, we now identify $B$ with $\mathbb K[\partial_i]$, where now the partial derivatives $\{\partial_i\}$ w.r.t.\ the linear coordinates $\{x_i\}$ are assigned degree $1$ and anticommute with each other.

We denote by $\mathcal C_{2,0,0}^+$ the compactified configuration space of $2$ distinct points in $Q^{+,+}$ {\em modulo} rescalings (the I-cube): it is a compact, oriented smooth manifold with corners of dimension $3$.
We consider now the only non-trivial $4$-colored propagator $\omega^{+,-}$ on $\mathcal C_{2,0,0}^+$ in the present situation: namely, we consider the smooth $1$-form
\[
\omega^{+,-}(z_1,z_2)=\frac{1}{2\pi}\left[\mathrm d\ \mathrm{arg}(z_1-z_2)+\mathrm d\ \mathrm{arg}(\overline z_1-z_2)-\mathrm d\ \mathrm{arg}(\overline z_1+z_2)-\mathrm d\ \mathrm{arg}(z_1+z_2)\right],
\] 
for $(z_1,z_2)$ a pair of distinct points in $Q^{+,+}$.

The combination of~\cite[Lemma 5.4]{CFFR} and~\cite[Proposition 3.1]{CRT} proves that $\omega^{+,-}$ extends to a smooth, closed $1$-form on the compactified configuration space $\mathcal C_{2,0,0}^+$: we refer to~\cite[Lemma 3.4]{CRT} for the full list of the boundary properties of $\omega^{+,-}$.
We may consider the identity morphism of $\mathfrak g$, which may be regarded as an element $\tau$ of $\mathfrak g^*\otimes \mathfrak g[1]$: w.r.t.\ the basis $\{x_i\}$, $\tau$ may be written as $\tau=\partial_{x_i}\otimes \iota_{\mathrm d x_i}$, where $\iota$ denotes contraction.
It is clear that $\tau$ extends to a graded biderivation on $T_\mathrm{poly}(X)=\mathrm S(\mathfrak g)\otimes \wedge(\mathfrak g^*)$, for $X=\mathfrak g^*$. 
Then, $\omega^{+,-}\otimes\tau$ is a smooth, closed $1$-form on $\mathcal C_{2,0,0}^+$ with values in the endomorphism of the graded vector space $T_\mathrm{poly}(X)$ of degree $-1$ (whence its total degree is $0$).

We denote by $\mathcal C_{1,0,0}^+$ the compactified configuration space of a single point in $Q^{+,+}$ {\em modulo} rescalings: it is a compact, oriented smooth manifold with corners of dimension $1$.
It admits a natural smooth, exact $1$-form $\mathrm d\eta$, which is the smooth extension to $\mathcal C_{1,0,0}^+$ of the exterior derivative of the normalized angle function $Q^{+,+}\ni z\overset{\eta}\mapsto \arg(z)/2\pi$: observe that $\mathrm d\eta$ vanishes on the two boundary strata of codimension $1$ of $\mathcal C_{1,0,0}^+$.

As $X=\mathfrak g^*$ is a vector space, we may consider the divergence operator $\mathrm{div}=\partial_i\iota_{\mathrm dx_i}$ on $T_\mathrm{poly}(X)$ of degree $-1$ w.r.t.\ the standard volume form on $X$.
It is then clear that $\eta\otimes\mathrm{div}$ is a smooth, exact $1$-form on $\mathcal C_{1,0,0}^+$ with values in the endomorphisms of $T_\mathrm{poly}(X)$ (it also has total degree $0$). 

More generally, for a triple of non-negative integers satisfying the same inequality as above, we denote by $\mathcal C_{n,k,l}^+$ the compactified configuration space of $n$ distinct points in $Q^{+,+}$, $k$, resp.\ $l$, ordered points in $i\mathbb R^+$, resp.\ $\mathbb R^+$, {\em modulo} rescalings: it is a compact, oriented smooth manifold with corners of dimension $2n+k+l-1$ (whence the inequality).

For an admissible graph $\Gamma$ of type $(n,k,l)$, we denote by $e$ a general oriented edge of $\Gamma$: $e$ may be denoted also by $e=(v_1,v_2)$, $v_1$ and $v_2$ being its initial point and endpoint respectively.
Observe that we allow $v_1=v_2$; of course, $v_i$ may be either of the first or second type.

Associated to a directed edge $e$ of an admissible graph $\Gamma$ of type $(n,k,l)$, there are either natural projections $\mathcal C_{n,k,l}^+\overset{\pi_e}\to \mathcal C_{2,0,0}^+$, if $e=(v_1,v_2)$, $v_1\neq v_2$, or $\mathcal C_{n,k,l}^+\overset{\pi_e}\to \mathcal C_{1,0,0}^+$, if $e=(v,v)$, which simply forget all points in $\mathcal C_{n,k,l}^+$ except the one(s) corresponding to the endpoint(s) of $e$.
As is clear from the definition of $\pi_e$ and of $\mathcal G_{n,k,l}$, the image of $\pi_e$ may be actually a boundary stratum (even of codimension $2$) of $\mathcal C_{2,0,0}^+$.

With the previous notation, we define the superpropagator $\omega_e^K$ associated to a general edge $e$ of an admissible graph $\Gamma$ of type $(n,k,l)$ as 
\begin{equation}\label{eq-sup-prop}
\omega^K_e=\begin{cases}
\pi_e^*(\omega^{+,-})\otimes \tau_e,& \text{if $e=(v_1,v_2)$, $v_1\neq v_2$}\\
\frac{1}2\pi_e^*(\mathrm d\eta)\otimes \mathrm{div}_v,& \text{if $e=(v,v)$},
\end{cases}
\end{equation}
where $\tau_e$, resp.\ $\mathrm{div}_v$, is the endomorphism of $T_\mathrm{poly}(X)^{\otimes(n+k+l)}$ acting as $\tau$ on the components of $T_\mathrm{poly}(X)^{\otimes(n+k+l)}$ corresponding to the endpoints of $e$ and as the identity elsewhere, resp.\ as $\mathrm{div}$ on the component of $T_\mathrm{poly}(X)^{\otimes(n+k+l)}$ corresponding to the vertex $v$ and as the identity elsewhere.
Therefore, for any directed edge $e$ of $\Gamma$ in $\mathcal G_{n,k,l}$, $\omega_e^K$ is a closed element of $\Omega^1(\mathcal C_{n,k,l}^+,\mathrm{End}(T_\mathrm{poly}(X)^{\otimes(n+k+l})))$ of total degree $0$.

\subsection{Explicit formul\ae\ for the $A_\infty$-$A$-$B$-bimodule structure over $K$}\label{ss-3-2}
With the help of the superpropagator~\eqref{eq-sup-prop}, we proceed to define the $A_\infty$-$A$-$B$-bimodule structure over $K$.

We first associate to $\Gamma$ in $\mathcal G_{n,k,l}^+$ the following operator:
\begin{equation}\label{eq-form-K}
\mathcal O_\Gamma^K=\mu_{n+k+l+1}\circ\int_{\mathcal C_{n,k,l}^+}\prod_{e\in\mathrm E(\Gamma)}\omega^K_e:T_\mathrm{poly}(X)^{\otimes(n+k+l+1)}\to \mathbb K,
\end{equation}
where $\mu_{n+k+l}:T_\mathrm{poly}(X)^{\otimes(n+k+l+1)}\cong A^{\otimes(n+k+l+1)}\otimes B^{\otimes(n+k+l+1})\to \mathbb K^{\otimes 2(n+k+l+1)}=\mathbb K$ is the tensor product of the augmentation morphisms on $A$ and $B$, and $\mathrm E(\Gamma)$ denotes the set of edges of $\Gamma$.
We observe that the ordering in the product on the right-hand side of~\eqref{eq-form-K} is not important, as the total degree of each factor is $0$. 

It is pretty obvious that the operator $\mathcal O_\Gamma$ vanishes, if $\Gamma$ contains multiple edges (any power of the operator-valued $1$-form $\omega^K_e$ of degree bigger than $1$ vanishes); no short loop can be attached to a vertex of the second type (the operator-valued $1$-form $\omega_e^K$ vanishes, for $e$ a short loop based on $i\mathbb R^+\sqcup \mathbb R^+$, because $\mathrm \d\eta$ vanishes on the boundary strata of $\mathcal C_{1,0,0}^+$).
Finally, $\mathcal O_\Gamma$ vanishes unless $|\mathrm E(\Gamma)|=2n+k+l-1$, because the dimension of $\mathcal C_{n,k,l}^+$ must be equal to the form degree, which in turn equals by definition the number of edges of $\Gamma$.

Observe that there are natural injections of graded algebras $A,B\hookrightarrow T_\mathrm{poly}(X)$.
Then, the (non-shifted) Taylor component $\mathrm m_K^{k,l}$ of the $A_\infty$-$A$-$B$-bimodule structure over $K$ is defined as the composition
\begin{equation}\label{eq-bimod-K}
\xymatrix{A^{\otimes k}\otimes K\otimes B^{\otimes l}\ar@{^{(}->}[r] & T_\mathrm{poly}(X)^{k+l+1}\ar^-{\sum_{\Gamma\in\mathcal G_{0,k,l}}\mathcal O_\Gamma^K}[rr] & & K}.
\end{equation}
Here, $K=\mathbb K$ is regarded also as a subspace of $T_\mathrm{poly}(X)$.
The corresponding shifted Taylor components are denoted by $\mathrm d_K^{k,l}$.
It has been proved in~\cite[Subsection 6.2]{CFFR} and~\cite[Subsection 2.1]{CFR} that $K$ is a strictly unital $A_\infty$-$A$-$B$-bimodule.

\subsection{The $A_\infty$-bimodule structure on the $A_\infty$ bar construction of $A$}\label{ss-3-3}
We consider the $A_\infty$-bimodule structure over $K$ specified by Formul\ae~\eqref{eq-bimod-K}.
Since $A$, $B$ are flat, and $A$, $K$ are strictly unital, Proposition~\ref{p-bar} implies that there is an $A_\infty$-quasi-isomorphism of $A_\infty$-$A$-$B$-bimodules from $A\underline\otimes_A K$ to $K$.
A direct computation implies that $A\underline\otimes_A K$ is a dg vector space concentrated in non-positive degrees; recalling~\eqref{eq-tayl-tens}, its $A_\infty$-$A$-$B$-bimodule structure is given by  
\begin{equation}\label{eq-bar-tayl}
\begin{aligned}
\mathrm d_{A\underline\otimes_A K}^{0,0}(a\otimes (\widetilde a_1|\cdots|\widetilde a_q)\otimes 1)&=s\!\left((a\widetilde a_1)\otimes (\widetilde a_2|\cdots|\widetilde a_q)\otimes 1+\sum_{i=1}^{q-1}(-1)^i a\otimes (\widetilde a_1|\cdots|\widetilde a_i\widetilde a_{i+1}|\cdots|\widetilde a_q)\otimes 1+\right.\\
&\phantom{=}\left.+(-1)^q a\otimes (\widetilde a_1|\cdots|\widetilde a_{q-1})\otimes \varepsilon(\widetilde a_q)\right),\\
\mathrm d_{A\underline\otimes_A K}^{1,0}(a_1|a\otimes (\widetilde a_1|\cdots|\widetilde a_q)\otimes 1)&=s\!\left((a a_1)\otimes (\widetilde a_1|\cdots|\widetilde a_q)\otimes 1\right),\\
\mathrm d_{A\underline\otimes_A K}^{0,n}(a\otimes (\widetilde a_1|\cdots|\widetilde a_q)\otimes 1|b_1|\cdots|b_n)&=(-1)^q\sum_{l=0}^q s\!\left(a_1\otimes (\widetilde a_1|\cdots|\widetilde a_l)\otimes s^{-1}(\mathrm d_K^{q-l,n}(\widetilde a_{l+1}|\cdots|\widetilde a_q|1|b_1|\cdots|b_n))\right),
\end{aligned}
\end{equation}
and, in all other cases, the Taylor components are trivial; $\varepsilon$ denotes here the augmentation map of $A$.

The first identity in~\eqref{eq-bar-tayl} yields the identification between the dg vector space $\left(A\underline\otimes_A K,\mathrm d_{A\underline\otimes_A K}^{0,0}\right)$ with the actual bar complex of the left augmentation module $K=\mathbb K$ over $A$, whence the name.

Further, the identities~\eqref{eq-bar-tayl} imply that the left $A_\infty$-$A$-module structure on the bar complex $A\underline\otimes _A K$ of $K$ is the standard one, while the non-triviality of the $A_\infty$-$A$-$B$-bimodule structure on $K$ yields non-triviality of the right $A_\infty$-$B$-module structure on $A\underline\otimes_A K$.

\section{Deformation quantization of the $A_\infty$-$A$-$B$-bimodule $K$}\label{s-4}
We borrow notation from Section~\ref{s-3}.
Let $\hbar$ be a formal parameter as in Section~\ref{s-0} and consider the $\hbar$-shifted Kirillov--Kostant--Souriau Poisson bivector field $\hbar\pi$.

\subsection{Deformation quantization of $A$}\label{ss-4-1}
The formality $L_\infty$-quasi isomorphism $\mathcal U_A$ of Kontsevich~\cite{K} yields an associative algebra structure over $A_{\hbar}=A[\![\hbar]\!]$, which we denote by $(A_{\hbar},\star_{\hbar})$.
More precisely, if $\mathrm m_A$ denotes the standard commutative, associative product on $A$, extended by $\hbar$-bilinearity to $A_{\hbar}$, the $\hbar$-formal bidifferential operator $\mathrm m_{A_{\hbar}}^2=\mathrm m_A+\mathcal U_A(\hbar\pi)$ defines an associative product $\star_{\hbar}$ over $A_{\hbar}$, and the properties of $\mathcal U$ imply 
\[
a_1\star_{\hbar} a_2-a_2\star_{\hbar} a_1=\hbar[a_1,a_2],\ a_i\in \mathfrak g\subseteq A_{\hbar}.
\] 

Observe that we may safely set $\hbar=1$, see~\cite[Theorem 8.3.1]{K} for an explanation thereof.
Accordingly, we may thus consider the associative algebra $(A,\star_A)$.

We recall briefly the Duflo element $\sqrt J$ of $\mathfrak g$.
By definition, $\sqrt J$ is an invertible, $\mathfrak g$-invariant differential operator on $A$ with constant coefficients of infinite order acting on $A$: it is a formal linear combination with rational coefficients of traces of powers of the adjoint representation of $\mathfrak g$ on itself.
We just recall that the rational coefficients of $\sqrt J$ are the (modified) Bernoulli numbers with generating function $\sqrt{(1-e^{-x})/x}$.
\begin{Prop}\label{p-DK}
For $\mathfrak g$ as in Section~\ref{s-0}, there exists an explicit algebra isomorphism from $(A,\star)$ to $(\mathrm U(\mathfrak g),\cdot)$, given by composition of the Duflo element $\sqrt J$ with the symmetrization (or PBW, short for Poincar\'e--Birkhoff--Witt) isomorphism from $A$ to $\mathrm U(\mathfrak g)$ (as vector spaces). 
\end{Prop}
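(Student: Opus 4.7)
The plan is to follow Kontsevich's original strategy in~\cite[Section 8.3]{K}. Let $\beta : A = \mathrm S(\mathfrak g) \to \mathrm U(\mathfrak g)$ denote the symmetrization (PBW) isomorphism, and set $\Phi = \beta \circ \sqrt{J}$, where $\sqrt{J}$ is the constant-coefficient infinite-order differential operator on $A$ associated with the formal series $\det^{1/2}\bigl((1-e^{-\mathrm{ad}_{\bullet}})/\mathrm{ad}_{\bullet}\bigr)$. The task is to verify that $\Phi(f \star g) = \Phi(f) \cdot \Phi(g)$ for all $f, g \in A$.

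First I would reduce to exponentials $f = e^{\xi}$, $g = e^{\eta}$ with $\xi, \eta \in \mathfrak g$, since matching the identity on such elements determines it on all polynomials by extracting Taylor coefficients at $(\xi,\eta)=0$. Next, expanding $e^{\xi} \star e^{\eta}$ with Kontsevich's graph formula applied to the linear Poisson bivector $\pi$, one exploits the key structural fact that, because $\pi$ is linear in the coordinates $\{x_i\}$, every admissible Kontsevich graph contributing non-trivially factors combinatorially into (i) \emph{Lie-tree graphs} whose edges all end at the two marked points, and (ii) \emph{wheel graphs} attached to a subset of the aerial vertices. The tree part sums to $e^{\mathrm{BCH}(\xi,\eta)}$, while the wheel part contributes a scalar factor which is a formal function of $\mathrm{ad}_{\xi}$ and $\mathrm{ad}_{\eta}$ whose coefficients are integrals of products of the Kontsevich propagator over configuration spaces of wheels. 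On the other hand, $\Phi(e^{\xi}) \cdot \Phi(e^{\eta}) = \beta(\sqrt{J}(e^{\xi})) \cdot \beta(\sqrt{J}(e^{\eta}))$, which via $\beta(e^{\zeta}) = \exp_{\mathrm U}(\zeta)$ and the classical BCH identity in $\mathrm U(\mathfrak g)$ produces $\exp_{\mathrm U}(\mathrm{BCH}(\xi,\eta))$ multiplied by the scalar Duflo factors attached to the two arguments. Comparing the two sides forces the wheel weights to coincide with the modified Bernoulli numbers generating $\sqrt{(1-e^{-x})/x}$, which is exactly the defining series of $\sqrt{J}$.

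The main obstacle is the explicit evaluation of these wheel weights. A direct approach, due to Kontsevich and later refined by Shoikhet, performs the Kontsevich integrals over configuration spaces and recognizes the modified Bernoulli generating series; this is analytically the heaviest step and the one where the specific form $\sqrt{(1-e^{-x})/x}$ is genuinely produced rather than postulated. A more conceptual alternative, better suited to the present two-brane setting, is the route indicated in~\cite[Subsection 3.2]{CFR}: there $\sqrt{J}$ arises as the homotopy relating two natural $L_\infty$-quasi-isomorphisms associated with the two coisotropic branes $U_1 = X$ and $U_2 = \{0\}$, so that the algebra isomorphism $\Phi$ is deduced from a homotopy-invariance property of the formality morphism, thereby bypassing the direct weight computation and fitting coherently into the graphical framework already set up in Section~\ref{s-3}.
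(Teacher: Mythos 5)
Your overall strategy is the legitimate one: the paper does not reprove this statement but defers to \cite[Subsection 3.2]{CFR}, citing \cite[Subsection 8.3]{K} and \cite{Sh3} as the original sources, and your first paragraph is essentially the argument of the latter two (reduction to exponentials, tree/wheel factorization for a linear Poisson bivector, identification of the tree part with $\mathrm{BCH}$ --- itself a nontrivial theorem of Kathotia, or a consequence of associativity --- and evaluation of the wheel weights). There is, however, one concrete point at which your sketch would fail as written. For the Kontsevich star product, short loops are excluded and the odd-wheel weights of length $\geq 3$ vanish by a symmetry of the propagator, so the wheel factor extracted from $e^{\xi}\star e^{\eta}$ is a series in the \emph{even} traces $\mathrm{tr}(\mathrm{ad}^{2n})$ only. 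Its exponential is the \emph{modified} Duflo element $\sqrt{\widetilde J}$, with generating series $\sqrt{(e^{x/2}-e^{-x/2})/x}$, whose logarithm is an even function of $x$; it is \emph{not} $\sqrt{(1-e^{-x})/x}$, whose logarithm contains the odd term $-x/4$. The two differ by the factor $\exp(-c_1/4)$, $c_1=\mathrm{tr}\circ\mathrm{ad}$, so the comparison of the two sides does not ``force the wheel weights to coincide with the modified Bernoulli numbers generating $\sqrt{(1-e^{-x})/x}$'' as you assert. This is exactly the discrepancy the paper flags immediately after the statement (``the modified Duflo element actually appears'').

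The repair is the content of Lemma~\ref{l-tr-star} and the surrounding discussion in Subsection~\ref{ss-4-3}: since $c_1$ is $\mathfrak g$-invariant, $\pi+\epsilon c_1$ is Maurer--Cartan, and the vanishing of the higher Taylor components of $\mathcal U_A$ on affine vector fields shows that $c_1$ is a derivation of $\star$; hence $\exp(\pm c_1/4)$ is an automorphism of $(A,\star)$, and $\mathrm{PBW}\circ\sqrt J=\bigl(\mathrm{PBW}\circ\sqrt{\widetilde J}\bigr)\circ\exp(-c_1/4)$ is an algebra isomorphism if and only if $\mathrm{PBW}\circ\sqrt{\widetilde J}$ is. (Equivalently, in the two-brane setting of this paper the short-loop contribution of the $1$-wheel computed in Proposition~\ref{p-K-bimod} supplies precisely the missing $\tfrac14 c_1$.) You should make this step explicit; without it the isomorphism you construct is $\mathrm{PBW}\circ\sqrt{\widetilde J}$ rather than the one claimed. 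Your closing remark about the route of \cite[Subsection 3.2]{CFR} is appropriate in spirit, though that argument proceeds by exploiting the $A_\infty$-bimodule $K$ as a presentation of $(A,\star)$ by generators and relations rather than by a homotopy between two formality morphisms.
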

For a detailed proof, we refer to~\cite[Subsection 3.2]{CFR}: actually, Proposition~\ref{p-DK} has been proved elsewhere~\cite[Subsection 8.3]{K} and~\cite{Sh3}, but the techniques are quite different and moreover the modified Duflo element actually appears.
The latter fact does not cause any problem, see~\ref{ss-4-3} for a detailed explanation.

\subsection{Deformation quantization of $B$}\label{ss-4-2}
Set $X=\mathfrak g^*$, $\widehat X=\mathfrak g[1]$, both viewed now as graded linear manifolds: then, $A=\mathcal O(X)$ and $B=\mathcal O(\widehat X)$.
Furthermore, $T_\mathrm{poly}(X)=T_\mathrm{poly}(\widehat X)$, thus $\pi$ may be regarded as a quadratic vector field $\widehat \pi$ on $\widehat X$, which squares to $0$: viewed as a derivation of $B$, it obviously coincides with the Chevalley--Eilenberg differential $\mathrm d$ on the Chevalley--Eilenberg complex of $\mathfrak g$ with values in the trivial $\mathfrak g$-module $\mathbb K$.

Let use consider the $\hbar$-shifted vector field $\hbar\widehat\pi$ on $\widehat X$.
Then, the formality $L_\infty$-quasi isomorphism $\mathcal U_B$~\cite{CF} yields a flat $A_\infty$-structure on $B_{\hbar}=B[\![\hbar]\!]$.
More precisely, if $\mathrm m_B$ denotes the standard commutative, associative product on $B$, extended by $\hbar$-bilinearity to $B_{\hbar}$, the $\hbar$-formal multidifferential operator $\mathrm m_{B_{\hbar}}^2=\mathrm m_B+\mathcal U_B(\hbar\widehat\pi)$ defines an $A_\infty$-structure on $B_{\hbar}$: since $\hbar\widehat\pi$ is a vector field, the results of~\cite[Subsubsubsection 7.3.1.1]{K} imply that $\mathcal U_B(\hbar\widehat\pi)=\hbar\widehat\pi$, hence $B_{\hbar}$ is a dg algebra with $\mathrm m_{B_{\hbar}}^1=\hbar\mathrm d$ and wedge product $\mathrm m_{B_{\hbar}}^2=\mathrm  m_B$.

We may thus safely set $\hbar=1$ and consider the deformation quantization of $B$ as the Chevalley--Eilenberg complex of $\mathfrak g$ with values in the trivial module $\mathbb K$.

\subsection{Deformation quantization of $K$}\label{ss-4-3}
We now consider the $A_\infty$-$A$-$B$-bimodule structure over $K$ from Subsection~\ref{ss-3-2}; according to the previous subsections, we consider the deformed algebras $(A_{\hbar},\star_{\hbar})$ and $(B_{\hbar},\mathrm d_{B_{\hbar}})$ (where $\mathrm d_{B_{\hbar}}$ denotes collectively the $A_\infty$-algebra structure on $B_{\hbar}$).
Both $A_{\hbar}$ and $B_{\hbar}$ may be regarded as flat $A_\infty$-algebras: we are interested in the corresponding deformation quantization of the $A_\infty$-$A$-$B$-bimodule structure over $K$.

According to~\cite[Theorem 7.2]{CFFR} and~\cite[Theorem 3.5]{CRT}, such a deformation quantization is yielded by a formality $L_\infty$-quasi-isomorphism $\mathcal U$ in presence of two branes (here, the two branes (or coisotropic submanifolds) of $X$ are $X$ itself and $\{0\}$).
Then, $\mathrm m_{K_{\hbar}}=\mathrm m_K+\mathcal U(\hbar\pi)$ defines an $A_\infty$-$A_{\hbar}$-$B_{\hbar}$-bimodule structure over $K_{\hbar}=K[\![\hbar]\!]$.

More precisely, the (non-shifted) Taylor components $\mathrm m_{K_{\hbar}}^{k,l}$ of the $A_\infty$-$A_{\hbar}$-$B_{\hbar}$-bimodule structure on $K_{\hbar}$ are defined as the composed maps
\[
\begin{aligned}
\xymatrix{A_{\hbar}^{\otimes_{\hbar} k}\otimes_{\hbar} K_{\hbar}\otimes_{\hbar} B_{\hbar}^{\otimes_{\hbar} l}\cong(A^{\otimes k}\otimes K\otimes B^{\otimes l})[\![\hbar]\!]\ar@{^{(}->}[r] & T_\mathrm{poly}(X)^{\otimes(k+l+1)}[\![\hbar]\!]\ar[rrrrr]^-{\sum_{n\geq 0}\frac{1}{n!}\sum_{\Gamma\in\mathcal G_{n,k,l}}\mathcal O_\Gamma^K(\underset{n}{\underbrace{\hbar\pi,\dots,\hbar\pi}})} & & & & & K_{\hbar}}, 
\end{aligned}
\]
where all maps are extended $\hbar$-linearly, and we borrowed notation from Subsections~\ref{ss-3-1},~\ref{ss-3-2} and~\ref{ss-3-3}.
The corresponding shifted Taylor components are denoted by $\mathrm d_{K_{\hbar}}^{k,l}$.

We observe that arguments analogous to the ones in~\cite[Subsubsection 8.3.1]{K} imply that $\hbar$ may be safely set to be $1$: by abuse of notation, we denote by $\mathrm d_K^{k,l}$ and $\mathrm m_K^{k,l}$ the shifted and non-shifted Taylor components of the deformed $A_\infty$-bimodule structure over $K$: there is no risk of confusion, because from now on we will consider only the deformed $A_\infty$-bimodule structure.
\begin{Prop}\label{p-K-bimod}
The Taylor components $\mathrm m_K^{k,0}$ and $\mathrm m_K^{0,l}$ are trivial unless $k=l=1$; in the first case, the component $\mathrm m_K^{1,0}$ equals the augmentation map of $A$ composed with the Duflo element $\sqrt J$, while $\mathrm m_K^{0,1}$ equals simply the augmentation map of $B$. 
\end{Prop}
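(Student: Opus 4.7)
My plan is to prove both the vanishing and the identification by a degree-counting argument on admissible graphs, combined with the classical Kontsevich wheel-diagram computation of the Duflo element. The key observation is that the operator $\mathcal O_\Gamma^K$ ends with the total augmentation $\mu$ on each of the $k+l+1$ factors of $T_\mathrm{poly}(X)$, so only graphs in which every factor ends up of polynomial and form bidegree $(0,0)$ contribute.

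First, I would attach to each vertex an initial bidegree (polynomial degree, form degree): a $\pi$-insertion at an aerial vertex carries $(1,2)$ because $\pi=f_{ij}^k x_k\partial_i\partial_j$; an $A$-input $a$ carries $(\deg a,0)$; a $B$-input $b$ carries $(0,\deg b)$; and $K=\mathbb K$ carries $(0,0)$ and admits neither incoming nor outgoing edges. Each edge of $\Gamma$ acts through $\tau=\partial_{x_i}\otimes\iota_{dx_i}$, lowering the form degree at its source and the polynomial degree at its target by one; short loops act through $\mathrm{div}$. Requiring final bidegree $(0,0)$ at every factor forces balance conditions: each aerial vertex has in-degree $1$ and out-degree $2$, each $A$-input $a$ has in-degree $\deg a$ and out-degree $0$, and each $B$-input $b$ has in-degree $0$ and out-degree $\deg b$.

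Now I would compare these counts with $|\mathrm E(\Gamma)|=\dim\mathcal C_{n,k,l}^+=2n+k+l-1$. For $\mathrm m_K^{0,l}$ with $B$-inputs of total form degree $p$, the only possible targets are aerial, so $2n+p=n$, forcing $n=p=0$, and then $l-1=0$ gives $l=1$; the unique surviving graph has no edges, $\mathcal C_{0,0,1}^+$ is a point, and the map is the augmentation $\varepsilon_B$. Symmetrically, for $\mathrm m_K^{k,0}$ with $A$-inputs of total polynomial degree $q$, the edges count as $n+q=2n$ and the dimension as $2n=2n+k-1$, forcing $k=1$ and $q=n$. So only $\mathrm m_K^{1,0}$ and $\mathrm m_K^{0,1}$ survive, and for $\mathrm m_K^{1,0}$ the surviving graphs have $n=\deg a$ aerial vertices, each contributing exactly one spoke to $a$, with the aerial-to-aerial subgraph being a disjoint union of oriented cycles (self-loops included, contributing through $\mathrm{div}$). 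These are the Kontsevich wheel diagrams.

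The remaining step, which is the main obstacle, is to identify the weighted sum of wheel diagrams (with the integrals over $\mathcal C_{n,1,0}^+$ as coefficients) with the Duflo operator $\sqrt J$ acting on $a$, so that $\mathrm m_K^{1,0}(a\otimes 1)=\varepsilon_A(\sqrt J\, a)$. This is the direct one-boundary-vertex analog of Kontsevich's computation of the Duflo isomorphism for the star product, carried out in~\cite[Subsection 8.3]{K} and, along the lines used here, in~\cite[Subsection 3.2]{CFR}: each cycle of length $r$ yields a $\mathrm{tr}(\mathrm{ad}^r)$-factor times the corresponding order-$r$ constant-coefficient differential operator applied to $a$ at the origin, and the Kontsevich integrals reproduce the generating function $\sqrt{(1-e^{-x})/x}$. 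Since the configuration space $\mathcal C_{n,1,0}^+$ differs from the one used in~\cite{CFR} only by deleting one boundary vertex, this adaptation is essentially bookkeeping, and the stated formula for $\mathrm m_K^{1,0}$ follows.
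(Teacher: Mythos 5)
Your degree-counting argument for the vanishing statements is correct and is essentially the argument of the paper in a slightly cleaner packaging: the in/out-degree balance at each factor (forced by the terminal augmentations) together with the dimension constraint $|\mathrm E(\Gamma)|=2n+k+l-1$ does kill everything except $(k,l)=(1,0)$ and $(0,1)$, identifies $\mathrm m_K^{0,1}$ with $\varepsilon_B$, and reduces $\mathrm m_K^{1,0}$ to a sum over disjoint unions of wheels, short loops included. Up to that point your proof is fine.

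The gap is in the final step, which you dismiss as ``essentially bookkeeping''. The computations you invoke, namely~\cite[Subsection 8.3]{K} and~\cite[Subsection 3.2]{CFR}, are carried out on the upper half-plane with the Kontsevich angle form: there short loops do not occur, odd wheels of length $\geq 3$ vanish by Shoikhet's theorem, and what comes out is the \emph{modified} Duflo element $\sqrt{\widetilde J}$ (even wheels only), not the element with generating function $\sqrt{(1-e^{-x})/x}$. The present setting is the first quadrant with the four-colored propagator $\omega^{+,-}$, whose extra reflection terms are responsible both for the appearance of the short loop and for different weights; it is not obtained from the half-plane computation by ``deleting one boundary vertex''. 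The relevant quadrant computation of the wheel weights is~\cite[Proposition 17 and Theorem 18, Appendix B]{W}, which again produces the modified Duflo element, and the passage from $\sqrt{\widetilde J}$ to the actual $\sqrt J$ is carried entirely by the $1$-wheel $W_1$, i.e.\ the short loop, whose weight $\mathcal O_{W_1}^K=\tfrac14 c_1$ (with $c_1=\mathrm{tr}\,\mathrm{ad}$) must be evaluated separately, as in~\cite[Subsubsection 4.1.1]{CRT}; one then uses that $\sqrt J$ and $\sqrt{\widetilde J}$ differ by composition with $\exp(\tfrac14 c_1)$. Since the entire point of the proposition is that the \emph{actual} Duflo element appears here (and Lemma~\ref{l-tr-star} exists precisely to reconcile this with~\cite{W}), this term cannot be absorbed into the cited references: as written, your argument justifies only $\mathrm m_K^{1,0}=\varepsilon_A\circ\sqrt{\widetilde J}$ and asserts, without proof of the distinguishing short-loop contribution, the stronger statement.
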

\begin{proof}
Since $A$ and $K$ are both concentrated in degree $0$, necessarily $\mathrm m_K^{k,0}$ is trivial unless $k=1$.

Let us therefore consider $\mathrm m_K^{0,l}$, for $l\geq 1$.
Degree reasons imply that $\mathrm m_K^{0,l}(1,b_1,\dots,b_l)$ is non-trivial only if $\sum_{i=1}^l|b_i|=l-1$.
If $\Gamma$ is an admissible graph of type $(n,0,l)$, then we claim
\[
\mathcal O_\Gamma^K(\underset{n}{\underbrace{\pi,\dots,\pi}},1,b_1,\dots,b_l)=0, 
\]
unless $\Gamma$ is the only non-trivial element of $\mathcal G_{0,0,1}$.
This also proves the claim that $\mathrm m_K^{0,1}$ is the augmentation map of $B$.

As $l\geq 1$, we claim that $|b_i|\geq 1$, for all $i=1,\dots,l$.
Otherwise, $\Gamma$ would have a vertex $1\leq i\leq l$ of the second type of valence $0$ ({\em i.e.} no edge would depart from or arrive to the said vertex).
Assume first that $l\geq 2$ and that {\em e.g.} the first vertex of the second type is $0$-valent: using rescalings on $\mathcal C_{n,0,l}$, we fix the second vertex of the first type to $1$.
Then, there is nothing to be integrated over the interval $(0,1)$, hence $\mathcal O_\Gamma^K$ is trivial.
If $l=1$, assume $n\geq 1$ and the only vertex of the second type to be $0$-valent: using once again rescalings on $\mathcal G_{n,0,1}$, we may fix on the unit circle one of the $n$ vertices of the first type.
Then, there is also nothing to be integrated over $\mathbb R^+$ and therefore $\mathcal O_\Gamma^K$ vanishes. 

The computation of $\mathrm m_K^{1,0}$ has been performed in~\cite{W}.
The only difference between the claim above and~\cite[Proposition 17, Appendix B]{W} is that the augmentation map is composed here with the actual Duflo element, while in~\cite[Proposition 17, Appendix B]{W} it is composed with the modified Duflo element.

The claim lies in computing explicitly $\mathrm m_K^{1,0}$.
Let $\Gamma$ be an admissible graph of type $(n,1,0)$, for $n\geq 0$: observe that, by construction of $\mathcal O_\Gamma^K$, from a general vertex of the first type depart exactly two edges and to it arrives at most one edge because of the linearity of $\pi$.
Denote by $p$ the number of edges hitting the only vertex of the second type on $i\mathbb R^+$: as $\Gamma$ admits no multiple edges, $p\leq n$.
On the other hand, the polynomial degree in $A$ of $\mathcal O_\Gamma^K$ equals $n-(2n-p)=-n+p\geq 0$: namely, each vertex of the first type carries a copy of the linear bivector field $\pi$, and to every edge is associated by construction a derivative, thus $2n-p$ equals precisely the number of edges hitting vertices of the first type.
Therefore, $p=n$: this means that from each vertex of the first type depart two edges, of which exactly one hits a vertex of the first type and the other one hits the only vertex of the second type.

In other words, $\Gamma$ is a disjoint union of wheel graphs $W_n$, $n\geq 1$; observe that admissible graphs here admit short loops.
Pictorially, such a graph $\Gamma$ is of the form
\bigskip
\begin{center}
\resizebox{0.2 \textwidth}{!}{\begin{picture}(0,0)%
\includegraphics{wheel_augm.pstex}%
\end{picture}%
\setlength{\unitlength}{4144sp}%
\begingroup\makeatletter\ifx\SetFigFont\undefined%
\gdef\SetFigFont#1#2#3#4#5{%
  \reset@font\fontsize{#1}{#2pt}%
  \fontfamily{#3}\fontseries{#4}\fontshape{#5}%
  \selectfont}%
\fi\endgroup%
\begin{picture}(3667,3624)(2646,-6823)
\end{picture}%
}\\
\text{Figure 1 - The disjoint union $\Gamma$ of the $1$-wheel $W_1$ and the $4$-wheel $W_4$} \\
\end{center}
\bigskip
The arguments in the proof of~\cite[Theorem 18, Appendix B]{W} can be borrowed {\em verbatim}, but we have to keep track of the additional differential operator associated to the $1$-wheel $W_1$.
Slightly adapting the arguments of~\cite[Subsubsection 4.1.1]{CRT} to the present situation, the operator $\mathcal O_{W_1}^K$ equals precisely $1/4 c_1$, $c_1$ being the trace of the adjoint representation of $\mathfrak g$.
Therefore, $\mathrm m_K^{1,0}$ equals the augmentation map of $A$ composed with the Duflo element $\sqrt J$: the action of the automorphism $\exp(1/4 c_1)$ on the modified Duflo element (which corresponds to the total contribution coming from even wheel graphs), computed in~\cite[Proposition 17, Appendix B]{W}, yields the actual Duflo element $\sqrt J$.
\end{proof}
Observe first that the results of Proposition~\ref{p-K-bimod} are coherent with Proposition~\ref{p-DK}.

The presence of short loops in the formality theorem in presence of two branes is relatively new: short loops did not appear neither in the seminal paper~\cite{CF}, nor in~\cite{CT}, where formality in presence of two branes has been applied to problems in Lie algebra theory.
They have been first introduced to correct a slight problem (until then passed unnoticed) arising from a regular boundary contribution to the $4$-colored superpropagators on the boundary stratum $\mathcal C_2\times \mathcal C_{1,0,0}^+$ of $\mathcal C_{2,0,0}^+$, see~\cite[Subsection 7.2]{CFFR} and~\cite[Section 1]{CRT} for more details.

The following technical Lemma has already been used in~\cite[Subsection 3.2]{CFR} without proof: we present here a concise proof, and we comment it afterward.
\begin{Lem}\label{l-tr-star}
The trace of the adjoint representation $c_1$ of a finite-dimensional Lie algebra $\mathfrak g$ over $\mathbb K$ is a derivation of the deformed product $\star_A$ on $A$, {\em i.e.} $c_1(a_1\star_A a_2)=c_1(a_1)\star_A a_2+a_1\star c_1(a_2)$, $a_i$ in $A$.
\end{Lem}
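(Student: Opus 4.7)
The plan is to verify the derivation identity graph-by-graph, using Kontsevich's formula
\[
a_1\star_A a_2 \;=\; \sum_{n\ge 0}\frac{\hbar^n}{n!}\sum_{\Gamma\in\mathcal G_{n,2}}w_\Gamma\, B_{\Gamma,\pi}(a_1,a_2),
\]
and to show that, upon applying $c_1$, the extra derivative introduced by it either reassembles into $c_1(a_1)\star_A a_2+a_1\star_A c_1(a_2)$ or is killed by the vanishing of the trace of a commutator. First, as an operator on $A=\mathbb K[x_i]$, the element $c_1$ is the constant-coefficient first-order differential operator $c_1=c_1^k\partial_{x_k}$ with $c_1^k=\mathrm{tr}(\mathrm{ad}(x_k))=f_{ki}^i$. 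The crucial input is the identity
\[
f_{ij}^k c_1^k \;=\; f_{ij}^k f_{kl}^l \;=\; \mathrm{tr}\bigl(\mathrm{ad}([x_i,x_j])\bigr) \;=\; \mathrm{tr}\bigl([\mathrm{ad}(x_i),\mathrm{ad}(x_j)]\bigr) \;=\; 0,
\]
which geometrically expresses the fact that $c_1$, viewed as a constant vector field on $X=\mathfrak g^*$, Schouten-commutes with $\pi=f_{ij}^k x_k\partial_i\partial_j$.

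Next, one unpacks the polydifferential operator $B_{\Gamma,\pi}(a_1,a_2)$ associated to a Kontsevich graph $\Gamma$. Each aerial vertex $v\in\Gamma$ carries a factor $f_{i_vj_v}^{k_v}x_{k_v}$ (possibly differentiated by derivatives incoming along edges from other aerial vertices) together with two outgoing derivatives $\partial_{i_v}$, $\partial_{j_v}$ landing on other vertices of $\Gamma$; thus $B_{\Gamma,\pi}(a_1,a_2)$ is a polynomial in $x_1,\dots,x_d$ multiplied by iterated derivatives of $a_1$ and $a_2$. Applying $c_1=c_1^k\partial_{x_k}$ and distributing the extra derivative $\partial_{x_k}$ by the Leibniz rule, one obtains exactly three types of contributions, according to whether the extra $\partial_{x_k}$ lands on $a_1$, on $a_2$, or on the linear coefficient $x_{k_v}$ of some aerial vertex whose $x_{k_v}$ has not already been differentiated away by an incoming edge.

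The first two types, once contracted with $c_1^k$ and summed over $\Gamma$, reassemble into $c_1(a_1)\star_A a_2$ and $a_1\star_A c_1(a_2)$ respectively. The third type, at a vertex $v$, replaces the factor $f_{i_vj_v}^{k_v}x_{k_v}$ by $f_{i_vj_v}^{k_v}c_1^{k_v}=f_{i_vj_v}^{k_v}f_{k_vl}^l$, which vanishes by the identity above; thus every such contribution is zero graph by graph. Combining these two observations yields the desired derivation property.

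The main care point is the bookkeeping at the aerial vertices: one must verify that once the $x_{k_v}$ factor has been consumed by an incoming derivative (so that the effective coefficient at $v$ has become a constant), no further differentiation can act nontrivially at $v$ in the third-type contribution, so only vertices of in-degree zero actually produce a vanishing third-type term. Conceptually, the argument uses only that $c_1$ is a constant-coefficient vector field on $\mathfrak g^*$ satisfying $[c_1,\pi]=0$; the same computation shows that every $\mathfrak g$-invariant element of $\mathfrak g^*$ acts as a derivation of $\star_A$, a remark that fits naturally with the ``comment afterward'' promised by the author.
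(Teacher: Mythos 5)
Your proof is correct, but it takes a genuinely different route from the one in the paper. You verify the derivation identity directly on Kontsevich's explicit formula: applying the constant-coefficient vector field $c_1=c_1^k\partial_{x_k}$, $c_1^k=f_{ki}^i$, to each graph operator $B_{\Gamma,\pi}(a_1,a_2)$ and distributing the derivative by the Leibniz rule, the terms hitting $a_1$ or $a_2$ reassemble (precisely because $c_1$ has constant coefficients, so it commutes with all the edge derivatives) into $c_1(a_1)\star_A a_2+a_1\star_A c_1(a_2)$, while the terms hitting a vertex coefficient survive only at vertices of in-degree zero (by linearity of $\pi$) and are then killed by $f_{ij}^k c_1^k=\mathrm{tr}\bigl(\mathrm{ad}([x_i,x_j])\bigr)=0$. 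The paper argues structurally instead: since $c_1$ is a $\mathfrak g$-invariant element of $\mathfrak g^*$, $\pi+\epsilon c_1$ is a Maurer--Cartan element of $T_{\mathrm{poly}}(X)[\epsilon]$ for an odd parameter $\epsilon$; the $L_\infty$-quasi-isomorphism $\mathcal U_A$ sends it to a Maurer--Cartan element $\mathrm m_A+\gamma_1+\epsilon\gamma_0$ of $D_{\mathrm{poly}}(X)[\epsilon]$, whose $\epsilon$-component $\gamma_0$ is automatically a derivation of $\mathrm m_A+\gamma_1=\star_A$; finally $\gamma_0=c_1$ because the higher Taylor components of $\mathcal U_A$ vanish when one argument is an affine vector field (\cite{K}, Subsubsubsection 7.3.3.1). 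Your argument is more elementary and self-contained --- it makes visible exactly where the linearity of $\pi$ and the constancy of $c_1$ enter, and, as you observe, it yields at no extra cost that every $\mathfrak g$-invariant element of $\mathfrak g^*$ is a derivation of $\star_A$ --- whereas the paper's is shorter, avoids all graph bookkeeping, and delegates the cancellation to Kontsevich's vanishing lemma; in effect, your third-type cancellation is a hands-on substitute for that appeal.
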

\begin{proof}
Recall from~\cite{K} that the $L_\infty$-quasi-isomorphism $\mathcal U_A$ maps MC (shortly for Maurer--Cartan) elements of $T_\mathrm{poly}(X)$ to MC elements of $D_\mathrm{poly}(X)$, the Hochschild subcomplex of multidifferential operators on $A$.
We further consider the $1$-dimensional Grassmann algebra $\mathbb K[\epsilon]$ and accordingly $T_\mathrm{poly}(X)[\epsilon]$, $D_\mathrm{poly}(X)[\epsilon]$ with the induced dg Lie algebra structures and the $\epsilon$-linear extension $\mathcal U_A$.

Since $c_1$ is $\mathfrak g$-invariant, $\pi+\epsilon c_1$ is a Maurer--Cartan element of $T_\mathrm{poly}(X)[\epsilon]$, hence $\mathrm m_A+\mathcal U_A(\pi+\epsilon c_1)$ is also a MC element of $D_\mathrm{poly}(X)[\epsilon]$.
A general MC element $\gamma$ of $D_\mathrm{poly}(X)$ splits as $\gamma_1+\epsilon \gamma_0$, $\gamma_1$ a bidifferential operator and $\gamma_0$ a differential operator: the corresponding MC equation implies that $i)$ $\mathrm m_A+\gamma_1$ defines an associative product on $A$ and $ii)$ $\gamma_0$ is a derivation for the product $\mathrm m_A+\gamma_1$.

It suffices now to evaluate $\mathcal U_A(\pi+\epsilon c_1)$: it splits as $\mathcal U_A(\pi)+\epsilon c_1$, because the higher Taylor components of $\mathcal U_A$ vanish if at least one of their arguments is an affine vector field, see~\cite[Subsubsubsection 7.3.3.1]{K}.   
\end{proof}
Lemma~\ref{l-tr-star} implies, in particular, that $\exp(1/4 c_1)$ is an automorphism of $(A,\star_A)$, therefore there is no contradiction actually between~\cite[Proposition 17, Appendix B]{W} and Proposition~\ref{p-K-bimod}, we may choose either the actual Duflo element $\sqrt J$ or its modified version.

\subsection{Relationship with the Chevalley--Eilenberg complex $\mathrm{CE}(\mathfrak g)$}\label{ss-4-4}
Borrowing notation from the previous Subsections and Section~\ref{s-2}, we consider the graded vector space $A\otimes B^*=\mathrm S(\mathfrak g)\otimes\wedge(\mathfrak g)\cong A[\theta_i]$: we endow it with a dg bimodule structure {\em via}
\begin{align*}
\mathrm d(a(\theta_{i_1}\cdots \theta_{i_q}))&=\sum_{k=1}^q(-1)^{k+1}(a\star_Ax_{i_k})(\theta_{i_1}\cdots \widehat{\theta_{i_k}}\cdots \theta_{i_q})+\sum_{1\leq k<l\leq q}(-1)^{k+l}f_{i_k,i_l}^j a(\theta_j\theta_{i_1}\cdots\widehat{\theta_{i_k}}\cdots\widehat{\theta_{i_l}}\cdots \theta_{i_q}),\\
\mathrm m_L(a_1\otimes(a\theta^I))&=(a_1\star_A a)\theta^I,\\ 
\mathrm m_R((a\theta^I)\otimes b_1)&=(-1)^{|I||b_1|}a\otimes b_1(\theta^I),
\end{align*}
where we have used the identification $B=\mathbb K[\partial_{\theta_i}]$.

We further consider the (twisted) augmentation map $\varepsilon\circ\sqrt J$ from $(A,\star_A)$ to $\mathbb K$.
\begin{Prop}\label{p-CE-def}
For $\mathfrak g$ as in Section~\ref{s-0}, there is a commutative square of dg bimodules
\[
\xymatrix{A\otimes B^*\ar[d]_-{(\mathrm{PBW}\circ\sqrt J)\otimes 1}^-{\cong}\ar@{->>}[rr]_-{\sim}^-{\varepsilon\circ(\sqrt J\otimes 1)}& & \mathbb K\ar@{=}[d]\\
\mathrm{CE}(\mathfrak g)\ar@{->>}[rr]^-\sim_-\varepsilon & &\mathbb K},
\]
where the dg bimodule structure on the upper, resp.\ lower, row is described above, resp.\ in Section~\ref{s-2}; the horizontal arrows are quasi-isomorphisms of dg vector spaces, and the vertical arrows are actual isomorphisms of dg bimodules, $\mathrm{PBW}$ denotes the PBW isomorphism of vector spaces and $\sqrt J$ is the Duflo element.
\end{Prop}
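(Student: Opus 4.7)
The plan is to prove that the left vertical map $\Phi := (\mathrm{PBW}\circ\sqrt{J})\otimes 1$ is an isomorphism of dg bimodules; once this is established, the upper horizontal arrow is automatically a quasi-isomorphism (since the lower one is), and commutativity of the diagram reduces to a short computation at the level of augmentations.

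By Proposition~\ref{p-DK}, $\mathrm{PBW}\circ\sqrt{J}\colon (A,\star_A)\to (\mathrm U(\mathfrak g),\cdot)$ is an isomorphism of associative algebras. Consequently $\Phi$ is an isomorphism of graded vector spaces that intertwines the two left actions by construction. The right $(\wedge(\mathfrak g^*),\mathrm d,\wedge)$-action on both bimodules is contraction by $\partial_{\theta_i}$ on the $\wedge(\mathfrak g)\cong B^*$ factor (compare~\eqref{eq-CE-r} and the displayed formula for $\mathrm m_R$ in the proposition), and $\Phi$ is the identity on this factor, so the right actions are also intertwined.

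The substantive point is to verify $\Phi\circ\mathrm d = \mathrm d_{\mathrm{CE}}\circ\Phi$, where $\mathrm d$ is the differential defined on $A\otimes B^*$ and $\mathrm d_{\mathrm{CE}}$ is given by~\eqref{eq-CE-d}. The structure-constants summand of $\mathrm d$ only modifies the exterior factor and is thus preserved by $\Phi$ tautologically; the first summand reduces, via the algebra isomorphism applied to $a\star_A x_{i_k}$, to the identity $(\mathrm{PBW}\circ\sqrt{J})(x_{i_k})=x_{i_k}$ inside $\mathrm U(\mathfrak g)$. I would establish this identity as follows: the Duflo generating series $\sqrt{\det\bigl(\sinh(u/2)/(u/2)\bigr)}$ is an even formal power series in $u$, hence its expansion as a polynomial function on $\mathfrak g$ has no linear term. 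Therefore, realised as a constant-coefficient differential operator on $A=\mathrm S(\mathfrak g)$, $\sqrt{J}$ equals the identity plus a formal sum of operators of derivation-order $\ge 2$, all of which annihilate the linear polynomial $x_{i_k}$. Since $\mathrm{PBW}$ is the identity on $\mathfrak g\subset \mathrm U(\mathfrak g)$, the equality follows. This is the one place where actual work is needed; the rest of the verification will be essentially formal.

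Commutativity of the square amounts to the identity $\varepsilon_{\mathrm U(\mathfrak g)}\circ\mathrm{PBW}=\varepsilon_A$ on $A$, which holds because both augmentations extract the constant coefficient and $\mathrm{PBW}$ sends the augmentation ideal of $A$ into that of $\mathrm U(\mathfrak g)$; tensoring with the augmentation on $B^*$ yields $\varepsilon\circ\Phi = \varepsilon\circ(\sqrt{J}\otimes 1)$ on $A\otimes B^*$. Finally, $\varepsilon\colon\mathrm{CE}(\mathfrak g)\to\mathbb K$ is a quasi-isomorphism by Proposition~\ref{p-CE-bimod}, and since $\Phi$ is an isomorphism the upper horizontal arrow is a quasi-isomorphism of dg vector spaces as well. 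The main obstacle, as noted, is the identity $\sqrt{J}(x_i)=x_i$; it follows at once from the even-parity property of the Duflo function.
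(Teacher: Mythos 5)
Your overall route is the same as the paper's: the paper's proof of Proposition~\ref{p-CE-def} is a one-line appeal to the definitions of the two dg bimodule structures together with Propositions~\ref{p-DK} and~\ref{p-K-bimod}, and your write-up is precisely the verification that this appeal compresses. The reductions you perform (left actions via Proposition~\ref{p-DK}, right actions because $\Phi$ is the identity on the exterior factor, commutativity of the square via $\varepsilon_{\mathrm U(\mathfrak g)}\circ\mathrm{PBW}=\varepsilon_A$) are correct, and you correctly isolate the one non-formal point, namely the identity $(\mathrm{PBW}\circ\sqrt J)(x_i)=x_i$.

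However, your justification of that identity does not match the paper's normalization of the Duflo element, and the discrepancy is not innocent. You argue from the evenness of the series $\sqrt{\det\bigl(\sinh(u/2)/(u/2)\bigr)}$; that is the \emph{modified} Duflo element. The paper's $\sqrt J$ (Subsection~\ref{ss-4-1}, and explicitly in the proof of Proposition~\ref{p-K-bimod}, where the $1$-wheel $W_1$ contributes) has generating function $\sqrt{(1-e^{-x})/x}=e^{-x/4}\sqrt{\sinh(x/2)/(x/2)}$, which is not even: its linear term produces, after taking the trace of the adjoint representation, a first-order constant-coefficient operator proportional to $\partial_{c_1}$ with $c_1=\mathrm{tr}\circ\mathrm{ad}$. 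Hence for the paper's $\sqrt J$ one gets $\sqrt J(x_i)=x_i-\tfrac14\,\mathrm{tr}(\mathrm{ad}\,x_i)$, which equals $x_i$ only when $\mathfrak g$ is unimodular; the extra constant does not cancel against anything in $\mathrm d_{\mathrm{CE}}$, so your chain-map verification as written only goes through for the modified element, equivalently after twisting by the automorphism $\exp(\pm\tfrac14 c_1)$ of $(A,\star_A)$ provided by Lemma~\ref{l-tr-star}. You need either to carry out the computation with the paper's $\sqrt J$ and track the resulting $\mathrm{tr}(\mathrm{ad}\,x_i)$-terms, or to state explicitly that you are working with the modified element and explain why this is compatible with the top horizontal arrow, whose identification in Proposition~\ref{p-K-bimod} involves the actual (non-even) $\sqrt J$. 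Everything else in your argument is fine.
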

\begin{proof}
The proof follows immediately from the definition of the dg bimodule structures on $\mathrm{CE}(\mathfrak g)$ and $A\otimes B^*$ and Propositions~\ref{p-DK} and~\ref{p-K-bimod}. 
\end{proof}
Therefore, we may rightfully call the $4$-tuple $(A\otimes B^*,\mathrm d,\mathrm m_L,\mathrm m_R)$ the Chevalley--Eilenberg complex of $\mathfrak g$.

\section{An explicit $A_\infty$-quasi-isomorphism between the deformed $A_\infty$ bar complex and the Chevalley--Eilenberg complex of $A$}\label{s-5}
We consider the deformed flat $A_\infty$-algebras $A$, $B$ and the deformed flat $A_\infty$-$A$-$B$-bimodule $K$ from Section~\ref{s-4}, and the corresponding $A_\infty$-$A$-$B$-bimodule $A\underline{\otimes}_A K$: degree reasons and the results of Section~\ref{s-4} imply that Formul\ae~\eqref{eq-bar-tayl} must be modified as  
\begin{equation}\label{eq-bar-tayl-def}
\begin{aligned}
\mathrm d_{A\underline\otimes_A K}^{0,0}(a\otimes (\widetilde a_1|\cdots|\widetilde a_q)\otimes 1)&=s\!\left((a\star_A \widetilde a_1)\otimes (\widetilde a_2|\cdots|\widetilde a_q)\otimes 1+\sum_{i=1}^{q-1}(-1)^i a\otimes (\widetilde a_1|\cdots|\widetilde a_i\star_A \widetilde a_{i+1}|\cdots|\widetilde a_q)\otimes 1+\right.\\
&\phantom{=}\left.+(-1)^q a\otimes (\widetilde a_1|\cdots|\widetilde a_{q-1})\otimes \varepsilon(\sqrt J(\widetilde a_q))\right),\\
\mathrm d_{A\underline\otimes_A K}^{1,0}(a_1|a\otimes (\widetilde a_1|\cdots|\widetilde a_q)\otimes 1)&=s\!\left((a\star_A a_1)\otimes (\widetilde a_1|\cdots|\widetilde a_q)\otimes 1\right),\\
\mathrm d_{A\underline\otimes_A K}^{0,n}(a\otimes (\widetilde a_1|\cdots|\widetilde a_q)\otimes 1|b_1|\cdots|b_n)&=(-1)^q\sum_{l=0}^q s\!\left(a_1\otimes (\widetilde a_1|\cdots|\widetilde a_l)\otimes s^{-1}(\mathrm d_K^{q-l,n}(\widetilde a_{l+1}|\cdots|\widetilde a_q|1|b_1|\cdots|b_n))\right), 
\end{aligned}
\end{equation}
borrowing notation from Section~\ref{s-4}.
(The remaining Taylor components are, as before, trivial.)

Since $A\underline\otimes_A K$ and $A\otimes B^*$ are both resolutions of the (left) augmentation module $K$ over $A$, arguments from abstract homological algebra imply that they are quasi-isomorphic to each other as complexes of free left $A$-modules.
More precisely, there is a natural quasi-isomorphism from $A\otimes B^*$ to $A\underline\otimes_A K$ as complexes of left $A$-modules
\begin{equation}\label{eq-skew}
\Phi(\theta_{i_1}\cdots\theta_{i_q})=\sum_{\sigma\in\mathfrak S_q}(-1)^\sigma 1\otimes (x_{\sigma(i_1)}|\cdots|x_{\sigma(i_q)})\otimes 1,\quad 1\leq i_1<\cdots< i_q\leq d.
\end{equation}
It suffices to define the morphism $\Phi$ on monomials of the form $\theta_{i_1}\cdots\theta_{i_q}$, and then extend it $A$-linearly on the left.
It follows immediately that $\Phi$ is of degree $0$ and commutes with left $A$-action.

An easy computation shows that $\Phi$ commutes with differentials. 
It is a quasi-isomorphism since $\Phi(1)=1$.

\begin{Thm}\label{t-koszul-bar-1}
For $\mathfrak g$ as in Section~\ref{s-0}, the morphism~\eqref{eq-skew} defines a strict $A_\infty$-quasi-isomorphism from $A\otimes B^*$ to $A\underline\otimes_A K$, where the $A_\infty$-$A$-$B$-bimodule structures on $A\otimes B^*$ and $A\underline\otimes_A K$ are described in Subsection~\ref{ss-4-4}.
\end{Thm}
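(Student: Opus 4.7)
Because $\Phi$ is strict, verifying it is an $A_\infty$-morphism of $A_\infty$-$A$-$B$-bimodules reduces to three families of identities on its single Taylor component: commutation with the internal differentials $\mathrm d^{0,0}$, strict left $A$-linearity, and compatibility with the higher right $A_\infty$-$B$-action. The first two are already established in the text preceding the statement. Once $\Phi$ is shown to be a strict $A_\infty$-morphism, it is automatically a quasi-isomorphism: both $A\otimes B^*\cong\mathrm{CE}(\mathfrak g)$ (by Proposition~\ref{p-CE-def}) and $A\underline\otimes_A K$ (by Proposition~\ref{p-bar}) are resolutions of the left augmentation module $K$, free as complexes of left $A$-modules, and $\Phi(1)=1$ covers the identity of $K$.

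\textbf{Reduction to an antisymmetrization sub-lemma.} The remaining compatibility to verify reads, for every $n\geq 1$ and $a\theta^I\in A\otimes B^*$,
\[
\mathrm d^{0,n}_{A\underline\otimes_A K}\bigl(\Phi(a\theta^I)\,|\,b_1|\cdots|b_n\bigr) \;=\; \delta_{n,1}\,\Phi\bigl(\mathrm m_R(a\theta^I\otimes b_1)\bigr),
\]
up to the standard suspension signs. Plugging~\eqref{eq-skew} into the left-hand side, using~\eqref{eq-bar-tayl-def}, and regrouping the sum over $\mathfrak S_q$ (where $q=|I|$) as a sum over shuffles that split the indices of $I$ into an outer block kept in the bar-tensor and an inner block fed to $\mathrm m_K^{q-l,n}$, the problem reduces to the following sub-lemma: for pairwise distinct basis vectors $x_{j_1},\dots,x_{j_m}\in\mathfrak g$ and $b_1,\dots,b_n\in B$ with $n\geq 1$,
\[
\sum_{\sigma\in\mathfrak S_m}(-1)^\sigma\,\mathrm m_K^{m,n}\bigl(x_{j_{\sigma(1)}}|\cdots|x_{j_{\sigma(m)}}|1|b_1|\cdots|b_n\bigr)
\]
vanishes unless $(m,n)=(1,1)$, in which case it evaluates to $\langle b_1,x_{j_1}\rangle$; the $m=0$ case is already covered by Proposition~\ref{p-K-bimod}. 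Matching the signs from the shuffle decomposition then produces precisely $(-1)^{|I||b_1|}\,a\otimes b_1(\theta^I)$ for $n=1$ and vanishing for $n\geq 2$, as required.

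\textbf{The sub-lemma — main obstacle.} This is the technical core of the proof and is established by inspecting the Kontsevich-type operators $\mathcal O_\Gamma^K$ in~\eqref{eq-form-K} attached to the admissible graphs $\Gamma\in\mathcal G_{n_\pi,m,n}$ decorated by the linear bivector $\pi=f^k_{ij}x_k\partial_i\partial_j$. Linearity of $\pi$ means each internal vertex of $\Gamma$ carries exactly one $x$-variable; linearity of each $x_{j_r}$ at an $i\mathbb R^+$-vertex means each such vertex can absorb at most one incoming $\partial_x$ before being annihilated. A polynomial-degree count against the final augmentation $\mu:T_\mathrm{poly}(X)^{\otimes(m+n+1)}\to\mathbb K$, combined with the form-degree constraint $|\mathrm E(\Gamma)|=\dim\mathcal C^+_{n_\pi,m,n}=2n_\pi+m+n-1$, drastically restricts the admissible $\Gamma$. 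For $m\geq 2$ the surviving graphs pair up under the involution that swaps two adjacent $i\mathbb R^+$-vertices; the corresponding factor $\omega^{+,-}$ in the integrand reverses sign under the induced reflection symmetry of $\mathcal C^+_{n_\pi,m,n}$ (alternatively, a Stokes-theorem argument of the type used in~\cite{CFFR} yields the vanishing), whence the alternating sum cancels. For $m=1$ with $n\geq 2$ the same constraints admit no graph whatsoever. The only remaining case $m=n=1$ is realized by the unique graph with a single edge from the $i\mathbb R^+$-vertex to the $\mathbb R^+$-vertex, which evaluates via $\tau=\partial_{x_i}\otimes\iota_{\mathrm d x_i}$ to the pairing $\langle b_1,x_{j_1}\rangle$. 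Plugging this back into the shuffle decomposition completes the verification that $\Phi$ is a strict $A_\infty$-morphism, and hence the proof of Theorem~\ref{t-koszul-bar-1}.
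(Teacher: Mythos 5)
Your overall skeleton is the right one and matches the paper: $\Phi$ is declared strict, left $A$-linearity and compatibility with the differentials are already known, the quasi-isomorphism property follows since both sides resolve $K$ and $\Phi(1)=1$, and everything reduces to the compatibility of $\Phi$ with the Taylor components $\mathrm d^{0,n}_{A\underline\otimes_A K}$. The problem is your sub-lemma, which is false as stated, and the argument you give for it would prove too much. The claim that $\sum_{\sigma}(-1)^\sigma\mathrm m_K^{m,n}(x_{j_{\sigma(1)}}|\cdots|x_{j_{\sigma(m)}}|1|b_1|\cdots|b_n)$ vanishes for all $(m,n)\neq(1,1)$ contradicts the very identity you are trying to prove: for $n=1$ the required identity is $\mathrm d^{0,1}_{A\underline\otimes_A K}(\Phi(\eta)|b_1)=\Phi(\mathrm d^{0,1}_{A\otimes B^*}(\eta|b_1))$, and for $b_1=\partial_{\theta_{j_1}}\cdots\partial_{\theta_{j_p}}$ of degree $p\geq 2$ the right-hand side is a nonzero contraction, while by degree reasons the only term on the left-hand side that can contribute is the one involving $\mathrm m_K^{p,1}$ (the output degree of $\mathrm m_K^{m,1}(\cdots|1|b_1)$ is $|b_1|-m$, forcing $m=|b_1|=p$). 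Hence the antisymmetrized $\mathrm m_K^{m,1}$ must be nonzero for every $m\geq 1$ and must equal the full pairing $\langle b_1,x_{j_1}\wedge\cdots\wedge x_{j_m}\rangle$: the correct dichotomy is in $n$, not in $m$. Your proposed cancellation for $m\geq 2$ (pairing graphs under a swap of two adjacent $i\mathbb R^+$-vertices with a sign flip of $\omega^{+,-}$) is therefore untenable; moreover, that swap is not a symmetry of the ordered configuration space, and $\omega^{+,-}$ is not antisymmetric under it. (Incidentally, the single edge of the $(1,1)$-graph points from the $\mathbb R^+$-vertex carrying $b_1$ to the $i\mathbb R^+$-vertex carrying $x_{j_1}$, not the other way around.)

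The paper's actual mechanism is different and uses no cancellation under the $\mathfrak S_m$-antisymmetrization for the vanishing statements. A polynomial-degree count against the final augmentation shows that every admissible graph with $n_\pi\geq 1$ internal $\pi$-vertices contributes zero to $\mathrm d_K^{m,n}(x_{\cdot}|\cdots|x_{\cdot}|1|b_1|\cdots|b_n)$: the resulting polynomial degree is $-n_\pi-n+1$, negative for $n\geq 2$ and any $n_\pi$, and equal to $-n_\pi<0$ for $n=1$ and $n_\pi\geq 1$. This kills identity~\eqref{eq-A_inf-2} graph by graph and reduces identity~\eqref{eq-A_inf-1} to the undeformed case $n_\pi=0$, which is exactly the second part of the proof of Theorem 5.1 of~\cite{FRW} and produces the contraction with the correct weight. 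To salvage your write-up, restate the sub-lemma as: the antisymmetrized sum vanishes for $n\geq 2$ and equals the contraction $\langle b_1,x_{j_1}\wedge\cdots\wedge x_{j_m}\rangle$ for $n=1$; prove the first part by the degree count above and quote (or redo) the weight computation of~\cite{FRW} for the second.
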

\begin{proof}
We know that~\eqref{eq-skew} is a morphism of degree $0$ from $A^*\otimes B$ to $A\underline\otimes_A K$: we declare (the conjugation w.r.t.\ $s$ of) $\Phi$ to be the $(0,0)$-th Taylor component of the desired $A_\infty$-quasi-isomorphism, while for $(m,n)$ such that $m+n\geq 1$, we set simply $0$.
In other words, $\Phi$ is a strict $A_\infty$-morphism between $A_\infty$-$A$-$B$-bimodules.

As $\Phi$ is a strict $A_\infty$-morphism, and recalling that $A\otimes B^*$ is a dg bimodule and the Taylor components~\eqref{eq-bar-tayl-def}, the only non-trivial identities to be checked are
\begin{align}
\label{eq-A_inf-1} \mathrm d_{A\underline\otimes_A K}^{0,1}(\Phi(\eta)|b_1)&=\Phi(\mathrm d_{A\otimes B^*}^{0,1}(\eta|b_1)),\\
\label{eq-A_inf-2} \mathrm d_{A\underline\otimes_A K}^{0,p}(\Phi(\eta)|b_1|\cdots|b_p)&=0,\quad p\geq 2,
\end{align}
for $b_i$, $i=1,\dots,p$, resp.\ $\eta$, a general element of $B$, resp.\ $A\otimes B^*$.
Here, we have denoted by $\mathrm d_{A\otimes B^*}^{0,1}$ the (shifted) Taylor component of the $A_\infty$-bimodule structure on $A\otimes B^*$ corresponding to $\mathrm m_R$.

We prove first Identity~\eqref{eq-A_inf-2}: $A$-linearity implies that we may take $\eta$ of the form $\theta_{i_1}\cdots\theta_{i_q}$, $1\leq i_1<\cdots<i_q\leq d$.
Recalling now Identity~\eqref{eq-skew},
we rewrite the left-hand side in~\eqref{eq-A_inf-2} as
\[
\mathrm d_{A\underline\otimes_A K}^{0,p}(\Phi(\eta)|b_1|\cdots|b_p)=(-1)^q\sum_{l=0}^q\sum_{\sigma\in \mathfrak S_q}(-1)^\sigma s\!\left(1\otimes (x_{\sigma(i_1)}|\cdots|x_{\sigma(i_l)})\otimes s^{-1}(\mathrm d_K^{q-l,p}(x_{\sigma(i_{l+1})}|\cdots|x_{\sigma(i_q)}|1|b_1|\cdots|b_p)\right).
\]
We now analyze the last factor on the right-hand side: first of all, degree reasons imply that, for $0\leq l\leq q$,
\[
-(q-l)-1+\sum_{j=1}^p(|b_j|-1)+1\overset{!}=-1\Longleftrightarrow \sum_{j=1}^p |b_j|=p+q-l-1.
\]

We consider an admissible graph $\Gamma$ of type $(n,q-l,p)$, $n\geq 0$ and $l$, $q$, $p$ as before.

We first assume $n\geq 1$.
By construction, from each vertex of the first type of $\Gamma$ depart exactly two edges and to it arrive at most one edge; the $q-l$ vertices of the second type on $i\mathbb R^+$ can be only endpoints of edges, while the vertices of the second type on $\mathbb R^+$ can be only starting points of edges of $\Gamma$.
The arguments in the proof of the first statement of Proposition~\ref{p-K-bimod} imply that from each vertex of the second type on $\mathbb R^+$ of $\Gamma$ must depart at least one edge.
Similar arguments imply that every vertex of the second type on $i\mathbb R^+$ of $\Gamma$ is the endpoint of exactly one edge (observe that the arguments in $A$ are of polynomial degree $1$).
We now consider the polynomial degree of the corresponding multidifferential operator acting on $\{x_{\sigma(i_{l+1})},\dots,x_{\sigma(i_q)}\}$: it is precisely 
\[
n-\left(2n+\sum_{i=1}^p|b_i|-(q-l)\right)=-n-\sum_{i=1}^p|b_i|+q-l=-n-p+1.
\]
Observe that the first copy of $n$ appears because there are $n$ copies of a linear bivector field; to each edge is associated a derivative, and the number of derivatives acting on the copies of $\pi$ equals $2n$ plus the sum of the degrees of the elements $b_i$ of $B$ minus the number of arrows hitting the $q-l$ vertices of the second type on $i\mathbb R^+$.
The latter number is precisely $q-l$ by the previous arguments. 
As $n\geq 0$ and $p\geq 2$, the inequality $-n-p+1\geq 0$ is never satisfied, whence the claim.

If $n=0$, we may borrow the arguments of the proof of Identity (11) of~\cite[Theorem 5.1]{FRW}, and the claim follows. 

It remains to prove Identity~\eqref{eq-A_inf-1}.
We first evaluate the right-hand side: using the arguments of Section~\ref{s-2}, we may write $b_1=\partial_{j_1}\cdots\partial_{j_p}$, whence the right-hand side takes the form $(-1)^{(p+1)q}\Phi(b_1(\eta))$, where $\partial_i=\partial_{\theta_i}$.

The left-hand side of Identity~\eqref{eq-A_inf-1} has the explicit form
\[
\begin{aligned}
\mathrm d_{A\underline\otimes_A K}^{0,1}(\Phi(\eta)|b_1)&=(-1)^q\sum_{l=0}^q\sum_{\sigma\in \mathfrak S_q}(-1)^\sigma s\!\left(1\otimes (x_{\sigma(i_1)}|\cdots|x_{\sigma(i_l)})\otimes s^{-1}(\mathrm d_K^{q-l,1}(x_{\sigma(i_{l+1})}|\cdots|x_{\sigma(i_q)}|1|b_1)\right)=\\
&=(-1)^q\sum_{\sigma\in \mathfrak S_q}(-1)^\sigma s\!\left(1\otimes (x_{\sigma(i_1)}|\cdots|x_{\sigma(i_{q-p})})\otimes s^{-1}(\mathrm d_K^{p,1}(x_{\sigma(i_{q-p+1})}|\cdots|x_{\sigma(i_q)}|1|b_1)\right),
\end{aligned}
\]
where the second equality follows because of degree reasons.

First, if $q\leq p-1$, by degree reasons both sides of Identity~\eqref{eq-A_inf-1} vanish: this follows immediately from the previous formul\ae.
It remains therefore to prove the claim in the case $q\leq p$.

We consider an admissible graph $\Gamma$ of type $(n,p,1)$ for $n\geq 1$.
Repeating {\em verbatim} the arguments in the previous paragraph with obvious due changes, we find that the polynomial degree of the corresponding multidifferential operator acting on $\{x_{\sigma(i_{q-p+1})},\dots,x_{\sigma(i_q)}\}$ is precisely $-n$: as $n\geq 1$, the inequality is never satisfied.

This forces $n=0$, therefore by its very construction, $\mathrm d_K^{p,1}(x_{\sigma(i_{q-p+1})}|\cdots|x_{\sigma(i_q)}|1|b_1)$ equals the non-deformed corresponding expression.
This has been in turn computed explicitly in the second part of the proof of~\cite[Theorem 5.1]{FRW}, to which we refer for details.

Hence, the claim follows.
\end{proof}

\begin{bibdiv}
\begin{biblist}

\bib{CFFR}{article}{
 author={Calaque, Damien},
 author={Felder, Giovanni},
 author={Ferrario, Andrea},
 author={Rossi, Carlo A.},
 title={Bimodules and branes in deformation quantization },
 journal={Comp. Math.},
 volume={147},
 date={2011},
 number={01},
 pages={105--160}
}

\bib{CFR}{article}{
   author={Calaque, Damien},
   author={Felder, Giovanni},
   author={Rossi, Carlo A.},
   title={Deformation quantization with generators and relations},
   journal={J. Algebra},
   volume={337},
   date={2011},
   pages={1--12},
   issn={0021-8693},
   review={\MR{2796061}},
   doi={10.1016/j.jalgebra.2011.03.037},
}

\bib{CF}{article}{
  author={Cattaneo, Alberto S.},
  author={Felder, Giovanni},
  title={Coisotropic submanifolds in Poisson geometry and branes in the
  Poisson sigma model},
  journal={Lett. Math. Phys.},
  volume={69},
  date={2004},
  pages={157--175},
  issn={0377-9017},
  review={\MR{2104442 (2005m:81285)}},
}


\bib{CRT}{article}{
  author={Cattaneo, Alberto S.},
  author={Rossi, Carlo A.},
  author={Torossian, Charles},
  title={Biquantization of symmetric pairs and the quantum shift},
  journal={(submitted)}
  eprint={arXiv:1105.5973},
  date={2011},
}

\bib{CT}{article}{
   author={Cattaneo, Alberto S.},
   author={Torossian, Charles},
   title={Quantification pour les paires sym\'etriques et diagrammes de
   Kontsevich},
   language={French, with English and French summaries},
   journal={Ann. Sci. \'Ec. Norm. Sup\'er. (4)},
   volume={41},
   date={2008},
   number={5},
   pages={789--854},
   issn={0012-9593},
   review={\MR{2504434 (2010g:22031)}},
}

\bib{FRW}{article}{
  author={Ferrario, Andrea},
  author={Rossi, Carlo A.},
  author={Willwacher, Thomas},
  title={A note on the Koszul complex in deformation quantization},
  journal={Lett. Math. Phys.},
  volume={95},
  date={2011},
  number={1},
  pages={27-39}
 }

\bib{Kel}{article}{
  author={Keller, Bernhard},
  title={Introduction to $A$-infinity algebras and modules},
  journal={Homology Homotopy Appl.},
  volume={3},
  date={2001},
  number={1},
  pages={1--35 (electronic)},
  issn={1512-0139},
  review={\MR{1854636 (2004a:18008a)}},
}


\bib{K}{article}{
  author={Kontsevich, Maxim},
  title={Deformation quantization of Poisson manifolds},
  journal={Lett. Math. Phys.},
  volume={66},
  date={2003},
  number={3},
  pages={157--216},
  issn={0377-9017},
  review={\MR{2062626 (2005i:53122)}},
}

\bib{L-H}{article}{
 author={Lef\`evre-Hasegawa, Kenji},
 title={Sur les $A_\infty$-cat\'egories},
 eprint={http://people.math.jussieu.fr/~keller/lefevre/TheseFinale/tel-00007761.pdf},
 date={2003}
}

\bib{LV}{book}{
 author={Loday, Jean-Louis},
 author={Vallette, Bruno},
 title={Algebraic operads},
 date={2010}
}

\bib{Pr}{article}{
   author={Priddy, Stewart B.},
   title={Koszul resolutions},
   journal={Trans. Amer. Math. Soc.},
   volume={152},
   date={1970},
   pages={39--60},
   issn={0002-9947},
   review={\MR{0265437 (42 \#346)}},
}

\bib{Sh3}{article}{
   author={Shoikhet, Boris},
   title={Vanishing of the Kontsevich integrals of the wheels},
   note={EuroConf\'erence Mosh\'e Flato 2000, Part II (Dijon)},
   journal={Lett. Math. Phys.},
   volume={56},
   date={2001},
   number={2},
   pages={141--149},
   issn={0377-9017},
   review={\MR{1854132 (2002j:53119)}},
   doi={10.1023/A:1010842705836},
}



\bib{Weib}{book}{
   author={Weibel, Charles A.},
   title={An introduction to homological algebra},
   series={Cambridge Studies in Advanced Mathematics},
   volume={38},
   publisher={Cambridge University Press},
   place={Cambridge},
   date={1994},
   pages={xiv+450},
   isbn={0-521-43500-5},
   isbn={0-521-55987-1},
   review={\MR{1269324 (95f:18001)}},
}

\bib{W}{article}{
  author={Willwacher, Thomas},
  title={A counterexample to the quantizability of modules},
  journal={Lett. Math. Phys.},
  volume={81},
  date={2007},
  number={3},
  pages={265--280},
  issn={0377-9017},
  review={\MR{2355492 (2008j:53160)}},
}

\end{biblist}
\end{bibdiv}

\end{document}